\numberwithin{equation}{section}
\theoremstyle{plain}
\newtheorem{theorem}{Theorem}[section]
\newtheorem{lemma}[theorem]{Lemma}
\newtheorem{question}{Question}
\theoremstyle{definition}
\newtheorem{remark}[theorem]{Remark}
\newcommand{\Rmnum}[1]{\expandafter\@slowromancap\romannumeral #1@}
\newcommand{\mr}{\mathbb{R}}
\newcommand{\ud}{\mathrm{d}}
\newcommand{\ms}{\mathbb{S}}
\keywords{prescribed Q-curvature, complete metric, conformally flat.}
\subjclass{Primary: 53C18,   Secondary: 58J90.}
\address{Mingxiang Li, Department  of Mathematics \& Institue of Mathematical Sciences,  The Chinese University of Hong Kong, Shatin, NT, Hong Kong   }
\email{mingxiangli@cuhk.edu.hk}
\begin{document}
	\title{Obstructions to prescribed Q-curvature of complete  conformal  metrics on  $\mathbb{R}^n$  }
		\author{ Mingxiang Li}
\date{}
\maketitle
\begin{abstract}
	We  provide some obstructions to the prescribed Q-curvature problem for   the complete  conformal metrics  on $\mr^n$ with finite total Q-curvature. One of them   is a  Bonnet-Mayer type  theorem with respect to Q-curvature. Others  are related to  the decay rate of the  prescribed functions. 
\end{abstract}
	\section{Introduction}

	Given a smooth function $K(x)$ on standard sphere $(\ms^2, g_0)$,  the well-known Nirenberg problem is to find a conformal metric $g=e^{2u}g_0$ such that its Gaussian curvature equals to  $f$.  It is equivalent to solving the following conformally invariant equation on $\ms^2$
	\begin{equation}\label{Nirenberg problem}
		-\Delta_{\ms^2}u(x)+1=K(x)e^{2u(x)},\; x\in\ms^2
	\end{equation}
	where $\Delta_{\ms^2}$ is the Laplace-Beltrami operator. The famous Chern-Gauss-Bonnet formula requires that $\sup K>0$ which is an obvious obstruction. Surprisingly, another  obstruction to \eqref{Nirenberg problem} known as Kazdan-Warner identity \cite{KZ 74 Compact} was established  which can be stated as  follows
	\begin{equation}\label{KZ indentity}
		\int_{\ms^2}\langle\nabla x_i,\nabla K\rangle e^{2u}\ud\mu_{\ms^2}=0,\; 1\leq i\leq 3
	\end{equation}
where $x_i$ is the eigenfunction  satisfying $-\Delta_{\ms^2}x_i=2x_i$.
	Interested readers may refer to \cite{CY Acta}, \cite{CY JDG} for more information about Nirenberg problem.  A direct corollary  of the identity \eqref{KZ indentity} is that $f(x)=1+tx_i$ for any $t\not=0$ can not be the prescribed Gaussian curvature on $\ms^2$.  
	
	For open surfaces, without restricting in the conformal class,  some results have been  established in \cite{KZ 74 open} by  Kazdan and Warner. In particular, Theorem 4.1 in \cite{KZ 74 open} gives a necessary and sufficient condition for a smooth function on $\mr^2$ to be the prescribed Gaussian curvature of a complete Riemannian metric. However,  restricting in  the conformal class, the situation becomes very subtle.  Throughout this paper, we focus on the conformal  metrics of Euclidean space $\mr^n$ where $n\geq2$ is an even integer.  It  is better to  start from the two dimensional case. Given a smooth function $f(x)$ on $\mr^2$, we consider the following conformally invariant  equation
	\begin{equation}\label{NP for R^n}
		-\Delta u(x)=f(x)e^{2u(x)},\; x\in \mr^2.
	\end{equation}
Indeed, via a stereographic projection, the equation \eqref{Nirenberg problem} can be transformed into  \eqref{NP for R^n}.
	There are a lot of works devoted to this equation \eqref{NP for R^n} including \cite{CYZ}, \cite{CL 91 Duke}, \cite{CL 93}, \cite{Cheng-Lin MA}, \cite{HuTr},  \cite{Li23 MA}, \cite{McOwen}, \cite{Ni 82}, \cite{Sat}, \cite{Struwe} and many others.  	In particular, for   $f(x)\leq 0$, it has been well understood by the works  \cite{CYZ}, \cite{Cheng-Lin MA}, \cite{KY},  \cite{Ni 82},    \cite{Sat}  and many others.	
	
	In this paper, the  completeness of the metrics will be taken into account.
Under such geometric restriction, Cohn-Vossen  \cite{CV} and Huber \cite{Hu} gave a control of the Gaussian curvature integral. For readers' convenience, a baby version of  their results can be stated as follows.  Throughout this paper, $\varphi^+$ and $\varphi^-$  denote the positive part  and negative part of function $\varphi$ respectively.
	\begin{theorem}\label{thm: Cohn-Vossen inq}
		{\bf (Cohn-Vossen \cite{CV}, Huber \cite{Hu})} Consider a complete  metric $g=e^{2u}|dx|^2$ on $\mr^2$. If the negative part of its Gaussian curvature $K_g$ is integrable on $(\mr^2, e^{2u}|dx|^2)$ i.e.
		$$\int_{\mr^2}K_g^-e^{2u}\ud x<+\infty,  $$
		then there holds
		$$\int_{\mr^2} K_ge^{2u}\ud x\leq 2\pi.$$ 
	\end{theorem}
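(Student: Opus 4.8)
My plan is to give the standard proof of the Cohn--Vossen--Huber inequality, which uses nothing about the conformal structure: all that matters is that $(\mr^2,g)$ is a complete, non-compact Riemannian surface diffeomorphic to the plane and that $\int K_g^-\,\ud\mu_g<\infty$, where $\ud\mu_g=e^{2u}\,\ud x$ denotes the area element of $g$. Fix a basepoint $p$. By the Hopf--Rinow theorem the closed geodesic balls $\overline B_g(p,t)$ are compact and exhaust $\mr^2$, so the geodesic sphere $\partial B_g(p,t)$ is a nonempty compact set for every $t>0$. Write $L(t)$ for its length with respect to $g$ and set $\kappa(t)=\int_{B_g(p,t)}K_g\,\ud\mu_g$.

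The technical core is the \emph{Cohn--Vossen differential inequality}: $L(t)$ is finite and locally Lipschitz on $(0,\infty)$, and
\[
L'(t)\le 2\pi-\kappa(t)\qquad\text{for almost every }t>0.
\]
For $t$ below the first cut point of $p$ this is an equality, obtained by combining the first variation of length of the geodesic circles with the Gauss--Bonnet formula applied to $B_g(p,t)$; the substance is to show that, once the cut locus is entered, its contributions can only make $L'(t)$ smaller. I expect this cut-locus analysis (going back to Fiala and Hartman) to be the main obstacle; since the statement here is advertised as a baby version, it is enough either to invoke it or to restrict attention to radii below the first cut point.

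Granting the differential inequality, the conclusion follows quickly. Decompose $\kappa(t)=\kappa^+(t)-\kappa^-(t)$ with $\kappa^\pm(t)=\int_{B_g(p,t)}K_g^\pm\,\ud\mu_g$. The hypothesis gives $\kappa^-(t)\nearrow\|K_g^-\|_{L^1(\ud\mu_g)}<\infty$, while $\kappa^+(t)\nearrow\kappa^+_\infty\in(0,+\infty]$; hence $\kappa(t)\to\kappa_\infty:=\kappa^+_\infty-\|K_g^-\|_{L^1(\ud\mu_g)}\in(-\infty,+\infty]$, and for every $a<\kappa_\infty$ there is $t_0$ with $\kappa(t)\ge a$ for all $t\ge t_0$. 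Suppose, for contradiction, that $\kappa_\infty>2\pi$, and pick $a$ with $2\pi<a<\kappa_\infty$. Then $L'(t)\le 2\pi-a<0$ for almost every $t\ge t_0$, so by absolute continuity $L(t)\le L(t_0)+(2\pi-a)(t-t_0)\to-\infty$ as $t\to\infty$, contradicting $L(t)\ge 0$. Therefore $\kappa_\infty\le 2\pi$, and since the geodesic balls exhaust $\mr^2$, monotone convergence yields $\int_{\mr^2}K_g e^{2u}\,\ud x=\int_{\mr^2}K_g\,\ud\mu_g=\kappa_\infty\le 2\pi$.

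A second, more PDE-flavoured route stays within the conformal picture: the Gauss equation $-\Delta u=K_g e^{2u}$ and integration on Euclidean disks give $r\,\tfrac{\ud}{\ud r}\bar u(r)=-\tfrac1{2\pi}\int_{B_r}K_g e^{2u}\,\ud x$ for the spherical average $\bar u$, and the $L^1$-bound on the negative part again forces the right-hand side to have a limit; one would then have to show that completeness prevents $\bar u(r)$ from decaying faster than $-\log r$. The difficulty there is upgrading the averaged decay of $u$ to pointwise control along a curve going to infinity, which needs an oscillation estimate on circles; I find the geodesic-circle argument above cleaner and would present that.
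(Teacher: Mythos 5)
The paper does not prove this statement at all: Theorem \ref{thm: Cohn-Vossen inq} is quoted as a classical result of Cohn--Vossen and Huber, so there is no internal argument to compare yours against. Your sketch is the standard geodesic-circle proof, and the part you actually carry out --- deducing $\int_{\mr^2}K_g\,\ud\mu_g\le 2\pi$ from the differential inequality $L'(t)\le 2\pi-\kappa(t)$ via monotone convergence of $\kappa^{\pm}(t)$ and the contradiction $L(t)\to-\infty$ --- is correct, and it rightly yields integrability of $K_g^+$ as a by-product. The whole analytic weight of the theorem, however, sits in the Fiala--Hartman inequality itself (including the regularity of $t\mapsto L(t)$ beyond the cut locus, where one really needs the integrated form $L(t_2)-L(t_1)\le\int_{t_1}^{t_2}(2\pi-\kappa(t))\,\ud t$ rather than a pointwise a.e.\ derivative bound), and you defer exactly that to the literature; this is no less than what the paper does, so it is acceptable here, but be aware that your proposed fallback of ``restricting attention to radii below the first cut point'' does not work: completeness of a metric on $\mr^2$ does not force the cut locus of $p$ to be empty (Cartan--Hadamard needs $K_g\le 0$), and your contradiction argument requires letting $t\to\infty$ through an exhaustion by geodesic balls, so the cut-locus analysis can be cited but not avoided. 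Minor slips: $\kappa^+_\infty$ lies in $[0,+\infty]$, not $(0,+\infty]$, and the second, conformal-PDE route you mention would in any case need the normality/oscillation control that the rest of this paper develops, so dropping it is the right call.
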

For higher dimensional cases $n\geq4$ and  a conformal metric $g=e^{2u}|dx|^2$ on $\mr^n$, the Q-curvature with respect to such metric satisfies  the following conformally invariant equation
\begin{equation}\label{Q curvature equa}
	(-\Delta)^{\frac{n}{2}}u(x)=Q_g(x)e^{nu(x)},\; x\in \mr^n.
\end{equation} 
We say that the conformal metric $g=e^{2u}|dx|^2$  on $\mr^n$ has  a finite total Q-curvature if 
$$\int_{\mr^n}|Q_g|e^{nu}\ud x<+\infty.$$
Similar to two dimensional case, the equation \eqref{Q curvature equa} also comes from the standard sphere through  a stereographic projection.
Concerning the prescribed Q-curvature on standard sphere $\ms^n$, one may refer to \cite{Brendle}, \cite{CY 95 Ann}, \cite{JLX}, \cite{MalStru}, \cite{WX 09 JFA}  for more details. From analytic point of view to study the equation \eqref{Q curvature equa}, interested readers may  refer to \cite{HMM}, \cite{Li23 MA}, \cite{Lin}, \cite{Mar MZ},  \cite{WX} for more information. From geometric point of view, similar to Theorem \ref{thm: Cohn-Vossen inq}, the Q-curvature integral  is bounded from above under suitable geometric assumptions.
\begin{theorem}\label{thm: CQY, F,  NX}
	{\bf (Chang-Qing-Yang \cite{CQY}, Fang \cite{Fa}, Ndiaye-Xiao \cite{NX})} Consider a complete conformal  metric $g=e^{2u}|dx|^2$ on $\mr^n$ where $n\geq 4$ is an even integer with finite total Q-curvature. If the scalar curvature $R_g\geq 0$ near infinity, there holds
	$$\int_{\mr^n}Q_ge^{nu}\ud x\leq \frac{(n-1)!|\mathbb{S}^n|}{2}$$
	where $|\ms^n|$ denotes the volume of standard sphere $\ms^n$. 
\end{theorem}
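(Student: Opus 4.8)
The plan is to reduce the stated integral bound to a statement about the logarithmic decay rate of $u$ at infinity, by combining an integral representation for $u$ with the two geometric hypotheses. Put $\gamma_n:=\frac{(n-1)!|\ms^n|}{2}$; this is the constant for which $(-\Delta)^{\frac{n}{2}}\bigl(\gamma_n^{-1}\log\frac{1}{|x|}\bigr)=\delta_0$, and it is also half the total $Q$-curvature of the round sphere $\ms^n$, so the inequality to be proved is precisely that the normalized total $Q$-curvature $\alpha:=\gamma_n^{-1}\int_{\mr^n}Q_ge^{nu}\,\ud x$ satisfies $\alpha\le 1$ (if $\alpha\le 0$ there is nothing to prove). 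Since $\int_{\mr^n}|Q_g|e^{nu}\,\ud x<\infty$, the logarithmic potential
\[
 w(x):=\frac{1}{\gamma_n}\int_{\mr^n}\log\frac{|y|}{|x-y|}\,Q_g(y)e^{nu(y)}\,\ud y
\]
is well defined and solves $(-\Delta)^{\frac{n}{2}}w=Q_ge^{nu}=(-\Delta)^{\frac{n}{2}}u$, so $p:=u-w$ is polyharmonic on $\mr^n$. Standard estimates for logarithmic potentials (as in Lin and in Chang--Qing--Yang) give $w(x)=-\alpha\log|x|+o(\log|x|)$ as $|x|\to\infty$, at least in spherical average and pointwise off a thin set; writing $\bar f(r)$ for the average of $f$ over the sphere $\{|x|=r\}$, in particular $\bar w(r)=-\alpha\log r+o(\log r)$. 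Everything therefore reduces to controlling the polyharmonic remainder $p$.

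This is where the hypothesis $R_g\ge 0$ near infinity is used. Since $R_g=-2(n-1)e^{-2u}\bigl(\Delta u+\tfrac{n-2}{2}|\nabla u|^2\bigr)$, it is equivalent to $\Delta\bigl(e^{\frac{n-2}{2}u}\bigr)\le 0$ outside a compact set, i.e.\ $v:=e^{\frac{n-2}{2}u}>0$ is superharmonic on some $\{|x|>R_0\}$. Because $n\ge 3$, the radial inequality $\bigl(r^{n-1}\bar v'(r)\bigr)'\le 0$ forces $\bar v(r)$ to stay bounded above as $r\to\infty$, whence Jensen's inequality gives $\bar u(r)\le C$ for large $r$. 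Combined with the asymptotics of $w$ this shows that the polyharmonic function $p$ has at most logarithmic spherical-average growth; a Liouville-type argument for polyharmonic functions (e.g.\ an Almansi decomposition to kill the higher-order terms) then shows that $p$ is in fact a constant, so that $u(x)=-\alpha\log|x|+o(\log|x|)$ at infinity --- or, failing a fully uniform version, at least along a positive-measure set of directions $\omega$ one has $u(r\omega)\le-\alpha\log r+o(\log r)$.

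Finally, completeness of $g$ forces every divergent curve, in particular every radial ray $r\mapsto r\omega$, to have infinite $g$-length: $\int_{1}^{\infty}e^{u(r\omega)}\,\ud r=+\infty$ for every $\omega$. If $\alpha>1$, then along one of the directions $\omega$ produced above we would get $\int_{1}^{\infty}e^{u(r\omega)}\,\ud r\le\int_{1}^{\infty}r^{-\alpha+o(1)}\,\ud r<+\infty$, a contradiction. Hence $\alpha\le 1$, that is $\int_{\mr^n}Q_ge^{nu}\,\ud x\le\frac{(n-1)!|\ms^n|}{2}$.

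The main obstacle is the middle step. One must make the potential asymptotics sharp enough that the exceptional thin set does not swallow every direction, and one must genuinely exclude a polyharmonic remainder that grows along all rays: without the curvature hypothesis $u$ could tend to $+\infty$ everywhere (add a term like $+|x|^2$), making the completeness argument vacuous, so the superharmonicity of $e^{\frac{n-2}{2}u}$ is doing essential work. A secondary point is that $Q_g$ may change sign, so the potential estimates must be carried out separately on $Q_g^{+}$ and $Q_g^{-}$ and then recombined; this is routine once the structure above is in place.
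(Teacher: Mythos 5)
A preliminary remark: the paper does not prove this theorem at all — it is quoted from \cite{CQY}, \cite{Fa}, \cite{NX} — so your outline has to stand on its own. Its architecture (logarithmic potential $w$, polyharmonic remainder $p=u-w$, the scalar-curvature hypothesis to control $p$, completeness to force $\alpha\le 1$) is indeed the architecture of the cited proofs, and the two ends of your argument are fine. The genuine gap is the middle step, precisely the one you flag as the main obstacle. From $R_g\ge 0$ you retain only that $v=e^{\frac{n-2}{2}u}$ is superharmonic outside a compact set, hence $\bar v(r)\le C$ and, by Jensen, $\bar u(r)\le C$; combined with $\bar w(r)=(-\alpha+o(1))\log r$ this gives the \emph{one-sided} bound $\bar p(r)\le C+(\alpha+o(1))\log r$ on spherical averages. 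A one-sided average bound of this kind does not make a polyharmonic function constant: by Pizzetti's formula the spherical mean of $p$ about any center is a polynomial in $r^2$ of degree at most $\frac n2-1$, and an upper bound of size $o(r^2)$ only forces the leading coefficient to be nonpositive, not zero. Concretely, $p(x)=-|x|^2$ (with $\Delta^{2}p=0$, so $\Delta^{n/2}p=0$ for $n\ge4$) and any nonconstant harmonic polynomial such as $p(x)=x_1$ satisfy every estimate you have retained about $p$, and no Liouville or Almansi argument can exclude them from that information alone. Such remainders destroy the asymptotics $u=(-\alpha+o(1))\log|x|$ on which your completeness step relies, so the proof does not close as written.

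What the cited proofs actually use is the full pointwise inequality $\Delta u\le-\frac{n-2}{2}|\nabla u|^2$, not merely boundedness of averages of $e^{\frac{n-2}{2}u}$: one shows that the averages of $\Delta w$ (and of the higher Laplacians of $w$) over large balls tend to zero, deduces via the mean value property that $\Delta^{k}p\le 0$ everywhere, hence by Liouville each $\Delta^k p$ is a nonpositive constant, and then excludes the strictly negative constants by playing the gradient term of the scalar-curvature inequality, the finiteness of the total curvature and completeness against the resulting $-c|x|^{2k}$-type behavior; carrying this iteration through the whole Almansi expansion is the main technical content of \cite{Fa}, \cite{NX} for $n\ge 6$. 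In the language of this paper, the entire point of the theorem is that the hypotheses force $u$ to be a \emph{normal} solution; once normality is known, Theorem \ref{thm: normal metric iff} gives $\alpha=1-\tau(g)\le 1$ at once, and your final step is a variant of that. A secondary, fixable issue: to contradict completeness you must exhibit one divergent curve of finite length, so you need the upper bound for $u$ along an entire ray or curve, whereas the pointwise potential estimate carries the uncontrolled local term $\gamma_n^{-1}\int_{B_1(x)}\log\frac{1}{|x-y|}\,(Q_ge^{nu})^{+}\,\ud y$; since a priori the exceptional thin set could meet every ray, you must either run the length estimate through averages over annuli or construct a curve avoiding the bad set, rather than appeal to a positive-measure set of good directions.
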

For more related results, interested readers may refer to \cite{CQY2},  \cite{Li 23 Q-curvature}, \cite{Lu-Wang}, \cite{Wang Yi} and the references therein.

Recalling the equation \eqref{Q curvature equa}, a natural question is that  what kind of prescribed functions $f(x)$ on $\mr^n$ we can find a complete conformal metric $g=e^{2u}|dx|^2$ on $\mr^n$ such that $Q_g=f.$  Does it have  obstructions  like Kazdan-Warner identity \eqref{KZ indentity}?  To explore such a question, it is better to start with 
the  famous  Bonnet-Mayer's theorem which shows that for a complete manifold $(M^n, g_b)$, if the Ricci curvature $Ric_{g_b}\geq (n-1)g_b$, then $(M, g_b)$ is compact.  Interested readers may refer to Chapter 6 of \cite{Petersen} for more details. With help of Bonnet-Mayer's theorem,  an obvious obstruction  occurs for $n=2$.
	 \begin{theorem}\label{thm: Mayer's theorem}
	 	{\bf (Bonnet-Mayer's theorem)} 
	 	Given  a smooth function $f(x)\geq 1$ on $\mr^2$.
	 	There is no complete conformal metric $g=e^{2u}|dx|^2$ on $\mr^2$ such that its Gaussian curvature $K_g=f$.
	 \end{theorem}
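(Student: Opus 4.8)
The plan is to obtain a contradiction by applying the Bonnet-Mayer's theorem (Theorem \ref{thm: Mayer's theorem}'s namesake, recalled in Chapter 6 of \cite{Petersen}) directly to the metric $g$. Suppose, for contradiction, that there exists a complete conformal metric $g=e^{2u}|dx|^2$ on $\mr^2$ whose Gaussian curvature satisfies $K_g=f$. The only ingredient beyond the classical theorem is the curvature normalization peculiar to dimension two.

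First I would recall that on any two-dimensional Riemannian manifold the Ricci tensor is completely determined by the Gaussian curvature through the identity $Ric_g=K_g\,g$. Since by hypothesis $f(x)\geq 1$ for all $x\in\mr^2$, this yields the pointwise lower bound
$$Ric_g=f\,g\geq g=(n-1)g,\qquad n=2.$$
Thus $(\mr^2,g)$ is a Riemannian manifold whose Ricci curvature is bounded below by $(n-1)g$.

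Next, since $g$ is assumed to be complete, the hypotheses of Bonnet-Mayer's theorem are met, and it forces $(\mr^2,g)$ to be compact (indeed with diameter at most $\pi$). However, the underlying smooth manifold is $\mr^2$, which is non-compact; this contradiction proves that no such metric can exist. I do not expect any genuinely hard step here: the statement is an immediate consequence of Bonnet-Mayer's theorem once one notes that the completeness assumption on $g$ is exactly what licenses its application, and that the obstruction is ultimately topological, arising from the non-compactness of $\mr^2$ as a smooth manifold. The interest of the statement lies less in its proof than in motivating the higher-dimensional $Q$-curvature analogues pursued in the remainder of the paper, where the Ricci tensor is no longer pinned down by a single scalar and a substitute for Bonnet-Mayer's theorem must be developed.
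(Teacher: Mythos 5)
Your proof is correct and is exactly the argument the paper intends: in dimension two $Ric_g=K_g\,g=f\,g\geq (n-1)g$, so completeness plus the classical Bonnet--Myers theorem would force $\mr^2$ to be compact, a contradiction. The paper treats this statement as an immediate corollary of Bonnet--Myers and offers no further argument, so there is nothing to add.
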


Firstly, inspired by such a  result,  we  generalize it  to all higher dimensional cases.
\begin{theorem}\label{thm: f geq 1}
	Given a  smooth  function $f(x)$ on $\mr^n$ where $n\geq 2$ is an even integer and $f(x)\geq 1$ near infinity,   there is no complete  conformal metric $g=e^{2u}|dx|^2$ on $\mr^n$ with finite total Q-curvature such that its  Q-curvature $Q_g=f$.
\end{theorem}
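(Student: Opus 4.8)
The plan is to argue by contradiction, playing off two consequences of the hypotheses against each other. Suppose there were a complete conformal metric $g=e^{2u}|dx|^2$ on $\mr^n$ with finite total Q-curvature and $Q_g=f$, and fix $R_0$ so that $f\geq 1$ on $\mr^n\setminus B_{R_0}$. Since $|Q_g|=f$ outside $B_{R_0}$, finiteness of the total Q-curvature already forces the metric to have finite volume near infinity:
\begin{equation*}
\mathrm{Vol}_g(\mr^n\setminus B_{R_0})=\int_{\mr^n\setminus B_{R_0}}e^{nu}\,\ud x\;\leq\;\int_{\mr^n\setminus B_{R_0}}|Q_g|e^{nu}\,\ud x\;<\;+\infty .
\end{equation*}
The rest of the argument shows that completeness, together with finite total Q-curvature and the sign of $f$ at infinity, forces $\mathrm{Vol}_g(\mr^n\setminus B_R)=+\infty$ for all large $R$, a contradiction. (When $n=2$ this recovers Theorem \ref{thm: Mayer's theorem} and weakens its hypothesis from $f\geq 1$ on all of $\mr^2$ to $f\geq 1$ near infinity.)

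The tool is the asymptotic analysis of \eqref{Q curvature equa} under finite total Q-curvature. Writing $\gamma_n=\tfrac{(n-1)!\,|\ms^n|}{2}$ and $\alpha=\int_{\mr^n}Q_g e^{nu}\,\ud x$, one has the integral representation
\begin{equation*}
u(x)=\frac{1}{\gamma_n}\int_{\mr^n}\log\frac{|y|}{|x-y|}\,Q_g(y)e^{nu(y)}\,\ud y+P(x)=:v(x)+P(x),
\end{equation*}
with $P$ a polynomial of degree at most $n-1$ and $v(x)=-\tfrac{\alpha}{\gamma_n}\log|x|+o(\log|x|)$ as $|x|\to\infty$. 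Here the hypothesis $f\geq 1$ near infinity enters in a second, crucial way: for $|y|>R_0$ the density $Q_g e^{nu}$ is nonnegative, the kernel $\log\frac{|y|\,|x|}{|x-y|}$ is nonnegative for $|x|,|y|$ large, and one extracts the one-sided refinement $v(x)\geq -\tfrac{\alpha}{\gamma_n}\log|x|-C$ for large $|x|$. These facts are the substance of the analysis behind Theorem \ref{thm: CQY, F,  NX}; compare \cite{CQY}, \cite{Lin}, \cite{Mar MZ}, \cite{Li23 MA}. I would then split into two cases according to $P$.

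Suppose first that $P$ is non-constant, with leading homogeneous part $P_d\not\equiv 0$. If $P_d(\theta)<0$ for some unit vector $\theta$, then $u(r\theta)\to-\infty$ faster than any multiple of $\log r$ as $r\to+\infty$, so the ray $r\mapsto r\theta$ leaves every compact set with finite $g$-length; by Hopf--Rinow this curve is a Cauchy sequence with no limit, contradicting completeness. Hence $P$ is bounded below, so $P_d\geq 0$ on the unit sphere and $P_d>\delta>0$ on some open cone $\mathcal{C}$; there $u(x)\geq\tfrac{\delta}{2}|x|^{\deg P}-C\log|x|\to+\infty$, whence $\int_{\mathcal{C}\cap\{|x|>R\}}e^{nu}\,\ud x=+\infty$. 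Suppose instead $P\equiv c$ is constant, so $u(x)=c-\tfrac{\alpha}{\gamma_n}\log|x|+o(\log|x|)$. Completeness along any single ray gives $\int_1^\infty e^{u(r\theta)}\,\ud r=+\infty$, which forces $\alpha\leq\gamma_n$ (otherwise the exponent of $r$ in $e^{u(r\theta)}$ is $-\alpha/\gamma_n+o(1)$, eventually below $-1$, and the integral converges). Then, using the one-sided bound on $v$,
\begin{equation*}
\mathrm{Vol}_g(\mr^n\setminus B_R)=\int_{\mr^n\setminus B_R}e^{nu}\,\ud x\;\geq\;c'\int_{\mr^n\setminus B_R}|x|^{-n\alpha/\gamma_n}\,\ud x\;=\;c'\,|\ms^{n-1}|\int_R^\infty r^{\,n-1-n\alpha/\gamma_n}\,\ud r\;=\;+\infty,
\end{equation*}
since $n-1-n\alpha/\gamma_n\geq -1$. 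In either case $\mathrm{Vol}_g(\mr^n\setminus B_R)=+\infty$, contradicting the first paragraph.

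The technical heart, and the step I expect to be the main obstacle, is the asymptotic package of the second paragraph — the integral representation with polynomial remainder, the sharp leading behaviour $-\tfrac{\alpha}{\gamma_n}\log|x|+o(\log|x|)$, and the one-sided refinement coming from the positivity of $f$ at infinity; all of this uses finite total Q-curvature essentially and is exactly what underlies Theorem \ref{thm: CQY, F,  NX}. Once it is available, the volume bookkeeping is soft, and the borderline case $\alpha=\gamma_n$ (where the end is asymptotic to $\ms^n$ minus a point) costs nothing, being absorbed into the divergence of $\int_R^\infty r^{-1}\,\ud r$. For $n=2$, $P$ is harmonic and its constancy whenever $u$ is bounded above is classical, so the whole scheme specializes to the known two-dimensional picture.
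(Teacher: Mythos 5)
Your overall strategy (rule out a nontrivial polyharmonic part, then use the one-sided lower bound on the logarithmic potential to force infinite volume near infinity, contradicting $e^{nu}\in L^1$) is close in spirit to the paper, and in the "normal" case your volume contradiction is essentially the paper's argument. But there is a genuine gap at the step you yourself flag as the "technical heart": you assert, citing the classification literature, that $u=v+P$ with $P$ a \emph{polynomial} of degree at most $n-1$. What is automatic is only that $u-\mathcal{L}(fe^{nu})$ is $\tfrac n2$-polyharmonic; polyharmonic (even harmonic, $n=2$) functions need not be polynomials, and the results of Chen--Li, Lin, Martinazzi that yield a polynomial remainder are proved for constant (or specially structured) $Q$, not for an arbitrary prescribed $f$. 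Establishing that the remainder is in fact constant is precisely the content of the quoted Theorem~\ref{thm: normal metric iff} (from \cite{Li 23 Q-curvature}), and the paper's proof uses it directly: since $f\geq 1$ near infinity and $fe^{nu}\in L^1$, one gets $e^{nu}\in L^1$, hence $\tau(g)=0$, hence by Theorem~\ref{thm: normal metric iff} the metric is normal \emph{and} $\beta=1$ exactly; then Lemma~\ref{lem: varphi >0} plus Jensen gives $\int_{B_{|x|/2}(x)}fe^{nu}\geq C>0$ for all large $|x|$, contradicting integrability. Your proposal instead tries to re-derive normality by hand, and those ad hoc steps do not go through as written.

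Concretely, two sub-steps fail. First, the pointwise asymptotic $v(x)=-\tfrac{\alpha}{\gamma_n}\log|x|+o(\log|x|)$ is not true in general: by Lemma~\ref{lem: L(f)} there is an extra term $\int_{B_1(x)}\log\tfrac{1}{|x-y|}\varphi(y)\,\ud y$, which for $\varphi=fe^{nu}\geq 0$ near infinity is nonnegative and need not be $o(\log|x|)$ pointwise; only the averaged statement (Lemma~\ref{lem:B_r_0|x|}) and the one-sided lower bound (Lemma~\ref{lem: varphi >0}) are available. Consequently every step of yours that needs an \emph{upper} bound on $u$ along a ray is unjustified: the claim that a direction with $P_d(\theta)<0$ yields a finite-length ray, and the claim that completeness along a ray forces $\alpha\leq\gamma_n$ in the constant-$P$ case, both require controlling $v$ from above along that ray, which you have not done (one would at least need to select rays avoiding concentration of the $L^1$ mass). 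Second, even granting polynomial $P$, your case analysis is doing by hand what Theorem~\ref{thm: normal metric iff} packages; without either invoking that theorem (as the paper does, via $\tau(g)=0$) or supplying a Pizzetti/Liouville-type argument for polyharmonic functions with one-sided averaged growth control (as the paper does for $n=2$ in Theorem~\ref{thm:exist for l>1}), the proof is incomplete. The parts that do work — $e^{nu}\in L^1$ from $f\geq1$ near infinity, the lower bound $u\geq -\beta\log|x|-C$ via Lemma~\ref{lem: varphi >0} once normality and $\beta\leq 1$ are known, and the resulting infinite-volume contradiction — coincide with the paper's argument.
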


One may ask whether $f\geq 1$ near infinity  is a sharp barrier for the existence of   complete conformal  metrics and what kind of behaviors occur if $f(x)$ tends to zero near infinity. Precisely, consider a function $f$ satisfying 
\begin{equation}\label{s decay}
	|f(x)|\leq C(|x|+1)^{-s}, \; s>0
\end{equation}
where $C$ is a positive constant which may be different from line to line throughout this paper. When $f(x)$ is positive somewhere and satisfies \eqref{s decay},
the existence of solutions to the following equation
\begin{equation}\label{conformal equation}
	(-\Delta )^{\frac{n}{2}}u(x)=f(x)e^{nu(x)}, \; x\in \mr^n
\end{equation}
has been established by Theorem 1  in \cite{McOwen} for $n=2$ and Theorem 2.1 in \cite{Chang-Chen} for $n\geq 4$. Taking the  completeness of   metrics into account, Aviles \cite{Aviles} studied the equation \eqref{conformal equation} for $n=2$ and he showed that,  for   $f$ positive somewhere and  $s\geq 2$ in \eqref{s decay}, there exists  complete conformal metric. Besides, for $0<s<1$, if  $f$ satisfies 
\begin{equation}\label{f|x|^s=1}
	\lim_{|x|\to\infty} f(x)|x|^s=1,
\end{equation}
Aviles claimed  that there also exists a complete metric(See Theorem $\mathrm{A^1}$ in \cite{Aviles}). However, Cheng and Lin constructed a family of functions $f(x)$ satisfying \eqref{f|x|^s=1} (See Theorem 1.1 \cite{Cheng-Lin Pisa}) to show the non-existence of   complete   conformal metric which  contradicts to  Aviles's claim.  Cheng and Lin's example tells  us that the complete conformal metric exists some obstructions even the given function $f$ satisfies \eqref{s decay}. Inspired by this, we will give another barrier. 

In fact,  Kazdan-Warner identity \eqref{KZ indentity} establishes an obstruction for the prescribed Q-curvature on $\ms^n$, one could ask whether there are  some  barriers from this perspective.               Indeed, Kazdan-Warner identity on $\mr^n$ is known as  Pohozaev's identity, and several works, including \cite{CL 93}, \cite{Cheng-Lin MA}, \cite{Li23 MA}, \cite{LW}, \cite{Xu05}, and many others, are devoted to it.   We provide an obstruction  for the existence of complete conformal  metric from this point of view.
	\begin{theorem}\label{thm:xnabla log f geq -1}
	Given a  positive and smooth $f(x)$ on $\mr^n$ where $n\geq 2$ is an even integer and $f(x)$ satisfies 
	\begin{equation}\label{condition for f}
		\frac{x\cdot \nabla f(x)}{f(x)}\geq -\frac{n}{2}.
	\end{equation}
	Then there is no complete conformal metric $g=e^{2u}|dx|^2$ on $\mr^n$  with finite total Q-curvature  such that its  Q-curvature $Q_g=f(x)$.
\end{theorem}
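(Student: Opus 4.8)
The plan is to argue by contradiction: suppose such a complete conformal metric $g=e^{2u}|dx|^2$ with finite total Q-curvature and $Q_g=f$ exists. The first step is to record the standard integral representation for solutions of \eqref{conformal equation} with finite total Q-curvature. Writing $\alpha=\frac{1}{(n-1)!|\ms^n|}\int_{\mr^n}f e^{nu}\,dx$ for the normalized total Q-curvature, one has the decomposition $u(x)=-\alpha\,\log|x|+O(1)$ at infinity (more precisely $u(x)=\frac{1}{(n-1)!|\ms^n|}\int_{\mr^n}\log\frac{|y|}{|x-y|}f(y)e^{nu(y)}\,dy + p(x)$ with $p$ a polynomial of degree $\le n-2$, which completeness forces to be a constant; completeness then also forces $\alpha\le 1$, and in fact one gets the sharp asymptotics $u(x)+\alpha\log|x|\to c_0$). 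I would invoke the results quoted in the excerpt (Chang–Qing–Yang, Fang, Ndiaye–Xiao, and the works of Li) to get this asymptotic behavior together with the gradient estimate $|\nabla u(x)|\le C|x|^{-1}$ near infinity.

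The second step is to derive the Pohozaev identity on a large ball $B_R$ for the equation $(-\Delta)^{n/2}u=fe^{nu}$. Testing the equation against $x\cdot\nabla u$ and integrating by parts on $B_R$ produces, after using that $(-\Delta)^{n/2}$ is the conformally covariant operator associated with the dilation vector field, an identity of the schematic form
\begin{equation*}
	\int_{B_R}\big(n + x\cdot\nabla\log f\big) f e^{nu}\,dx = \mathcal{B}(R),
\end{equation*}
where $\mathcal{B}(R)$ is a boundary integral on $\partial B_R$ built from $u$ and its derivatives up to order $n-1$. Here the factor $n$ on the left comes from the Jacobian/weight $nu$ combined with the degree of conformal covariance; one rewrites $x\cdot\nabla(fe^{nu})=(x\cdot\nabla f)e^{nu}+nf e^{nu}(x\cdot\nabla u)$ and moves the $x\cdot\nabla u$ term to the other side against the $(-\Delta)^{n/2}$ part, exactly as in the Pohozaev computations in \cite{CL 93}, \cite{Cheng-Lin MA}, \cite{Li23 MA}, \cite{Xu05}. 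Using the asymptotics $u\sim-\alpha\log|x|$, $|\nabla^k u|\lesssim|x|^{-k}$ from Step 1, every term in $\mathcal{B}(R)$ is controlled by a constant (independent of $R$), and in the limit $R\to\infty$ it converges to a finite value; the upshot is the clean identity
\begin{equation*}
	\int_{\mr^n}\Big(\frac{x\cdot\nabla f(x)}{f(x)}+n\Big) f(x) e^{nu(x)}\,dx = c_n\,\alpha(1-\alpha)
\end{equation*}
for an explicit positive dimensional constant $c_n$ (this is the Kazdan–Warner/Pohozaev identity on $\mr^n$ adapted to complete metrics; the right side is nonnegative precisely because completeness gives $0<\alpha\le 1$).

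The third step is the contradiction. Under hypothesis \eqref{condition for f}, $\frac{x\cdot\nabla f}{f}\ge-\frac n2$, so the integrand on the left is $\ge\frac n2 f e^{nu}\ge0$, and it is strictly positive somewhere since $f>0$; hence the left-hand side is strictly positive, which forces $0<\alpha<1$. But then I would combine this with a second constraint: completeness plus the asymptotic volume growth. Since $u\sim-\alpha\log|x|$, the metric has $e^{2u}\sim|x|^{-2\alpha}$, and completeness at infinity requires $\int^\infty r^{-\alpha}\,dr=\infty$, i.e. $\alpha\le 1$ — consistent so far — so the real contradiction must come from pushing the identity further. The cleaner route, which I expect to be the actual argument, is to observe that \eqref{condition for f} means $r\mapsto f(x)\,r^{n/2}$ is nondecreasing along rays with $r=|x|$, so $f(x)\ge c|x|^{-n/2}$ outside a compact set for some $c>0$; combined with $e^{nu}\sim|x|^{-n\alpha}$ this gives $\int_{\mr^n}f e^{nu}\,dx\ge c\int^\infty r^{n/2-n\alpha}\,r^{n-1}\,\frac{dr}{r}$, which \emph{diverges} unless $\alpha>\frac12+\frac12=\dots$ — more precisely it forces $\alpha>\frac{3}{2}$...

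**The main obstacle.** The delicate point is matching the sharp asymptotics with the two competing requirements: finite total Q-curvature (an upper bound on how slowly $fe^{nu}$ may decay) versus completeness (a lower bound forcing slow decay of $e^{nu}$, i.e. $\alpha$ not too large) versus the growth \eqref{condition for f} forces on $f$. I expect the clean contradiction to read: \eqref{condition for f} $\Rightarrow fe^{nu}\gtrsim|x|^{-n/2-n\alpha}$ near infinity, so $\int fe^{nu}<\infty$ forces $\alpha>\tfrac12$; meanwhile the Pohozaev identity gives $\tfrac n2\int fe^{nu}\le c_n\alpha(1-\alpha)$, and completeness/finite-curvature normalization gives $\alpha\le1$; one then checks these cannot simultaneously hold — the left side of the identity is bounded below by a positive quantity depending only on $c,\alpha,n$ (from the explicit integral lower bound) that exceeds $c_n\alpha(1-\alpha)$ for all admissible $\alpha\in(\tfrac12,1]$. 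Nailing down the constants in this last inequality, and justifying that the boundary terms $\mathcal{B}(R)$ genuinely converge (which requires the full strength of the asymptotic expansion of $u$, not just the leading term), is where the real work lies; everything else is bookkeeping with the integral representation.
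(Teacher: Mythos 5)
Your overall skeleton (integral representation for $u$, a Pohozaev argument with the dilation field, the lower bound $f\ge c|x|^{-n/2}$ from \eqref{condition for f}, completeness forcing the normalized total curvature to be at most $1$) is the same as the paper's, but two of your key steps are wrong as stated. The ``clean identity'' you write down has the wrong right-hand side: for normal solutions the Pohozaev computation gives, only as an inequality and only along a suitable sequence $R_i\to\infty$, $\limsup_i\frac{4}{n!|\ms^n|}\int_{B_{R_i}}x\cdot\nabla f\,e^{nu}\,\ud x\le\beta(\beta-2)$ with $\beta=\frac{2}{(n-1)!|\ms^n|}\int_{\mr^n}fe^{nu}\,\ud x$ (Lemma \ref{lem: Pohozaev for non-sign changing}); adding the hypothesis-adapted term $\frac n2 f$ rather than $nf$, one gets $\frac{4}{n!|\ms^n|}\int\bigl(x\cdot\nabla f+\tfrac n2 f\bigr)e^{nu}\,\ud x\le\beta(\beta-2)+\beta=\beta(\beta-1)$, which is \emph{nonpositive} once completeness gives $\beta\le1$, while \eqref{condition for f} and $f>0$ make the left side nonnegative (and strictly positive unless $x\cdot\nabla f=-\frac n2 f$ a.e., which fails near the origin since $f(0)>0$). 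That is the whole contradiction. Your version, with weight $+n$ and right side $c_n\alpha(1-\alpha)\ge0$, has the sign structure backwards, and this is exactly what pushes you into the inconclusive quantitative endgame (including the erroneous ``forces $\alpha>\frac32$'' step: the decay comparison only yields an exponent condition of the type $\beta>\frac12$, which by itself contradicts nothing).

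Two analytic inputs you lean on are also not available. First, normality of $u$ does not follow from completeness plus finite total Q-curvature alone, and the sharp asymptotics $u+\beta\log|x|\to c_0$ with $|\nabla^k u|\lesssim|x|^{-k}$ are false in general: normal solutions only satisfy averaged asymptotics (Lemmas \ref{lem: L(f)} and \ref{lem:B_r_0|x|}) because of the local term $\int_{B_1(x)}\log\frac{1}{|x-y|}fe^{nu}\,\ud y$. Normality has to be earned from the hypothesis: Lemma \ref{lem: f geq |x|^s} gives $f\ge c|x|^{-n/2}$, so $fe^{nu}\in L^1(\mr^n)$ forces $\int_{B_R(0)}e^{nu}\,\ud x\le CR^{n/2}$, hence $\tau(g)\le\frac12$, and Theorem \ref{thm: normal metric iff} then yields both normality and $\beta=1-\tau(g)\le1$. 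Second, because the pointwise asymptotics are unavailable, your boundary terms $\mathcal{B}(R)$ cannot be shown to converge; the paper's Lemma \ref{lem: Pohozaev for non-sign changing} sidesteps this entirely by running the Pohozaev argument on the integral representation itself, splitting the kernel, discarding the sign-definite piece $I_3\le0$ (this is where $f>0$ near infinity enters), and choosing a sequence $R_i$ along which the flux term vanishes — an inequality along a sequence suffices. Without these repairs the proposal does not close.
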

\begin{remark}
	We will show that  the condition \eqref{condition for f} is sharp  to some degree in Section \ref{section:sharp}.
\end{remark}
For non-positive functions, we also obtain a barrier.
\begin{theorem}\label{thm: f leq 0}
	Given a  non-positive and smooth $f(x)$ on $\mr^n$ where $n\geq 2$ is an even integer and $f(x)$ satisfies 
	\begin{equation}\label{f leq -|x|^-n}
		f(x)\leq -C|x|^{-n},\;|x|\gg1.
	\end{equation}
	Then there is no complete   conformal metric $g=e^{2u}|dx|^2$ on $\mr^n$  with finite total Q-curvature  such that its  Q-curvature $Q_g=f(x)$.
\end{theorem}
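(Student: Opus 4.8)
The plan is to argue by contradiction: suppose a complete conformal metric $g=e^{2u}|dx|^2$ on $\mr^n$ with $\int_{\mr^n}|f|e^{nu}\,\ud x<\infty$ and $Q_g=f$ exists, and then deduce $\int_{\mr^n}|f|e^{nu}\,\ud x=+\infty$. The guiding heuristic is that $f\le 0$ forces the normalised total Q-curvature
$$\alpha:=\frac{2}{(n-1)!\,|\ms^n|}\int_{\mr^n}Q_g\,e^{nu}\,\ud x$$
to be \emph{strictly negative}: it is $\le 0$ because $f\le 0$, and it is $<0$ because $f<0$ on the set $\{|x|\gg 1\}$ of positive measure, where $e^{nu}>0$. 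A negative $\alpha$ makes the volume density $e^{nu}$ \emph{grow} like $|x|^{n|\alpha|}$ near infinity rather than decay, and, combined with the slow-decay hypothesis $|f(x)|\ge C|x|^{-n}$, this is exactly what makes $|f|e^{nu}$ non-integrable.

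To make this precise I would first recall the integral representation for solutions of \eqref{conformal equation} of finite total Q-curvature (the Liouville-type theorems of Lin, Martinazzi, Chang--Qing--Yang and others, or the version set up in the preliminary part of the paper): $u=v+p$, where
$$v(x)=\frac{2}{(n-1)!\,|\ms^n|}\int_{\mr^n}\log\frac{|y|}{|x-y|}\,f(y)\,e^{nu(y)}\,\ud y$$
and $p$ is a polynomial with $(-\Delta)^{n/2}p\equiv 0$, so $\deg p\le n-1$. Splitting the defining integral over $\{|y|\le |x|/2\}$, $\{|x|/2<|y|\le 2|x|\}$ and $\{|y|>2|x|\}$ and using $f\le 0$ yields the estimates $v(x)\ge \tfrac{|\alpha|}{2}\log|x|$ and $v(x)\le |\alpha|\log|x|+C$ for $|x|\ge R_1$; the only term that might be infinite, $\int\log|y|\,|f|e^{nu}\,\ud y$, enters both inequalities with a favourable sign and can simply be discarded. (If, as is likely already available under completeness, one knows the stronger statement $u(x)=-\alpha\log|x|+O(1)$ with no polynomial part, the rest of the argument is immediate.)

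Next I would dispose of the polynomial $p$ using completeness. If $p(t\omega)\to-\infty$ along some ray $\{t\omega:t>0\}$, then $p(t\omega)\le -ct^{k}$ eventually for some $k\ge 1$, so $u(t\omega)\le |\alpha|\log t+C-ct^{k}$ and hence $\int_0^\infty e^{u(t\omega)}\,\ud t<\infty$; thus this Euclidean ray is a divergent curve of finite $g$-length, contradicting completeness. Otherwise $\liminf_{t\to\infty}p(t\omega)>-\infty$ for every $\omega$, which forces the top-degree homogeneous part of $p$ to be non-negative (and non-trivial, unless $p$ is constant), so $p\ge -C'$ on some open cone $\Gamma$ for $|x|\ge R_2$ (take $\Gamma=\mr^n$ if $p$ is constant). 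Then on $\Gamma\cap\{|x|\ge R_2\}$ we have $u=v+p\ge \tfrac{|\alpha|}{2}\log|x|-C'$, and using $|f|\ge C|x|^{-n}$,
$$\int_{\mr^n}|f|e^{nu}\,\ud x\ \ge\ c\int_{\Gamma\cap\{|x|\ge R_2\}}|x|^{-n+n|\alpha|/2}\,\ud x\ =\ c'\int_{R_2}^{\infty}r^{\,n|\alpha|/2-1}\,\ud r\ =\ +\infty,$$
since $n|\alpha|/2-1>-1$. In either case we contradict the finiteness of the total Q-curvature, which proves the theorem.

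The main obstacle is the asymptotic analysis of $v$ together with the precise interplay between the two hypotheses: completeness is used exactly to exclude the "collapsing" polyharmonic polynomials (e.g.\ $p=-|x|^2$, admissible when $n\ge 4$, which would otherwise produce an incomplete metric of finite total Q-curvature), while finiteness of $\int|f|e^{nu}$ rules out everything else via the polynomial growth of $e^{nu}$ — neither hypothesis alone suffices once $n\ge 4$. In the base case $n=2$ there is a shortcut needing neither the representation formula nor completeness: $f\le 0$ makes $u$ subharmonic, so its spherical average $\bar u(r)$ is non-decreasing, and Jensen's inequality gives $\int_{|x|\ge 1}|f|e^{2u}\,\ud x\ge c\int_1^{\infty}r^{-1}e^{2\bar u(r)}\,\ud r\ge c\,e^{2\bar u(1)}\int_1^{\infty}r^{-1}\,\ud r=+\infty$.
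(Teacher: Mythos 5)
Your high-level heuristic (negative total Q-curvature plus the slow decay $|f|\ge C|x|^{-n}$ forces $|f|e^{nu}\notin L^1$) is the same as the paper's, and your $n=2$ shortcut is correct and genuinely more elementary: $f\le 0$ makes $u$ subharmonic, the spherical means are nondecreasing, and Jensen on spheres gives divergence without using completeness (consistent with the paper's remark that for $n=2$ the conclusion holds without completeness, by Sattinger). However, for $n\ge 4$ your argument has a genuine gap at its very first step: you assume $u=\mathcal{L}(fe^{nu})+p$ with $p$ a \emph{polynomial} of degree $\le n-1$. All one knows a priori is that $u-\mathcal{L}(fe^{nu})$ is $\tfrac n2$-polyharmonic; it need not be a polynomial (e.g.\ $e^{x_1}\cos x_2$ is harmonic, hence annihilated by $(-\Delta)^{n/2}$), and even the degree bound is not a consequence of $(-\Delta)^{n/2}p=0$, since harmonic polynomials of arbitrary degree satisfy it. The classification results you invoke (Lin, Martinazzi) are for constant $Q$, and Chang--Qing--Yang use additional geometric hypotheses; none yields the polynomial decomposition for a general prescribed $f$. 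Excluding the non-polynomial (and indeed any non-constant) correction is exactly where the paper uses completeness quantitatively: from \eqref{f leq -|x|^-n} and $fe^{nu}\in L^1$ one gets $\int_{B_R(0)}e^{nu}\ud x\le CR^n$, hence $\tau(g)<\infty$, and Theorem \ref{thm: normal metric iff} (which needs completeness) then gives that $u$ is normal, i.e.\ $u=\mathcal{L}(fe^{nu})+C$ with no polynomial part at all. Your ray/cone completeness argument only handles polynomial corrections, so the proof is incomplete as written.

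There is a second, more technical flaw: the pointwise lower bound $v(x)\ge\tfrac{|\alpha|}{2}\log|x|$ does not follow from your splitting. Since $\varphi=fe^{nu}\le 0$, the near-diagonal contribution $\int_{B_1(x)}\log\frac{1}{|x-y|}\varphi(y)\ud y$ is \emph{nonpositive}, i.e.\ it enters the lower bound with the unfavourable sign, and it is not controlled by $\|\varphi\|_{L^1}$ (a thin spike of $e^{nu}$ near $x$ makes it as negative as one likes); the term you propose to discard, $\int\log|y|\,|\varphi|\,\ud y$, is not the dangerous one, because for $|y|\ge 2|x|$ the kernel $\log\frac{|y|}{|x-y|}$ is bounded. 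The pointwise \emph{upper} bound $v\le|\alpha|\log|x|+C$ along rays (which your completeness step uses) is fine — it is Lemma \ref{lem: varphi >0} applied to $-\varphi$ — but the lower bound on your cone is exactly what the paper avoids claiming pointwise: it instead averages $\mathcal{L}(\varphi)$ over balls $B_{r_1|x|}(x)$ (Lemma \ref{lem:B_r_0|x|}) and applies Jensen's inequality to $\int_{B_{|x|/2}(x)}|f|e^{nu}\ud y\ge C|x|^{-n}\cdot|x|^{n}\cdot|x|^{-n\beta+o(1)}$, which contradicts $\beta<0$ directly. That averaged device would also repair this part of your argument, but as stated both the estimate and its sign justification are incorrect, and the reduction to a polynomial part remains the essential missing ingredient.
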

\begin{remark}
	In fact, without completeness, the conclusion still holds for $n=2$ by the result of Sattinger \cite{Sat}. However, for $n\geq 4$ and  $f(x)\equiv-1$, a result of Martinazzi \cite{Martinazzi AANL} showed the existence of  non-complete  conformal metric.
\end{remark}
Furthermore, if  prescribed  functions may change sign, another barrier is established.
\begin{theorem}\label{thm: f < 0 near infinity}
	Given a  smooth $f(x)$ on $\mr^n$ where $n\geq 2$ is an even integer and $f(x)$ satisfies 
	\begin{equation}\label{f leq- |x|^s}
		f(x)\leq -C|x|^{s},\;|x|\gg1\;\mathrm{and}\; s>0.
	\end{equation}
	Then there is no complete   conformal metric $g=e^{2u}|dx|^2$ on $\mr^n$  with finite total Q-curvature  such that its  Q-curvature $Q_g=f(x)$.
\end{theorem}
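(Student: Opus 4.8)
The plan is to argue by contradiction, extracting the logarithmic growth rate of $u$ at infinity and colliding it with the hypothesis $f\le -C|x|^{s}$. Assume $g=e^{2u}|dx|^{2}$ is a complete conformal metric on $\mr^{n}$ with finite total Q-curvature and $Q_{g}=f$. Since $f(x)\le -C|x|^{s}<0$ for $|x|$ large, the set $\{f>0\}$ is contained in some ball $B_{R_{0}}$; as $|f|$ is bounded below by a positive constant outside $B_{R_{0}}$, finiteness of the total Q-curvature already forces $\int_{\mr^{n}}e^{nu}\,\ud x<\infty$, and more precisely
\[
\int_{\{|x|>R\}}|x|^{s}e^{nu}\,\ud x\ \le\ \frac{1}{C}\int_{\mr^{n}}|f|e^{nu}\,\ud x\ <\ \infty\qquad(R\ge R_{0}).
\]

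The main step is to establish the asymptotics of $u$. Writing $\overline u(r)$ for the mean value of $u$ over $\partial B_{r}$, I would show that
\[
\overline u(r)=\beta\log r+o(\log r)\ \ (r\to\infty),\qquad \beta=-\frac{2}{(n-1)!|\mathbb{S}^{n}|}\int_{\mr^{n}}Q_{g}e^{nu}\,\ud x,\qquad \beta\ge -1 .
\]
This rests on the finite-total-Q-curvature theory --- Huber for $n=2$, Chang--Qing--Yang and its refinements (Fang, Ndiaye--Xiao) for $n\ge 4$ --- and completeness enters twice. First, completeness forces $u$ to be a normal solution: in the representation $u(x)=\frac{2}{(n-1)!|\mathbb{S}^{n}|}\int_{\mr^{n}}\log\frac{|y|}{|x-y|}f(y)e^{nu(y)}\,\ud y+P(x)$ the polynomial $P$ must reduce to a constant, since a nonconstant $P$ is either unbounded above, making $e^{nu}$ blow up on some cone and violating $\int_{\mr^{n}}|f|e^{nu}\,\ud x<\infty$ (recall $|f|$ is bounded below near infinity), or unbounded below along some ray, giving that ray finite $g$-length. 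Second, completeness forces $\beta\ge -1$: along a ray $t\mapsto t\omega$ it gives $\int_{1}^{\infty}e^{u(t\omega)}\,\ud t=\infty$, which is impossible once $u(t\omega)\sim\beta\log t$ with $\beta<-1$. (The value $\beta=-1$ is the borderline of Theorems \ref{thm: Cohn-Vossen inq} and \ref{thm: CQY, F,  NX}.)

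Granting this, the contradiction is immediate. By Jensen's inequality the mean of $e^{nu}$ over $\partial B_{r}$ is at least $e^{n\overline u(r)}=r^{\,n\beta+o(1)}$, hence for $R$ large and any fixed $\varepsilon\in(0,s)$,
\[
\infty\ >\ \int_{\{|x|>R\}}|x|^{s}e^{nu}\,\ud x\ \ge\ c\int_{R}^{\infty}r^{s+n-1}e^{n\overline u(r)}\,\ud r\ \ge\ c\int_{R}^{\infty}r^{\,s+n-1+n\beta-\varepsilon}\,\ud r .
\]
Since $\beta\ge -1$ and $s>0$, the exponent is at least $s-1-\varepsilon>-1$, so the last integral diverges --- a contradiction. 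Hence no such metric exists.

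The main obstacle is the asymptotic step, and here the structure parallels Theorem \ref{thm: f leq 0}: there the extra input is $\int_{\mr^{n}}fe^{nu}\,\ud x\le 0$, i.e.\ $\beta\ge 0$, which is unavailable now because $f$ may be positive on the compact set $\{f>0\}$; the stronger decay $f\le -C|x|^{s}$ with $s>0$ is exactly what compensates, keeping the exponent above $-1$ for every $\beta\ge -1$. For $n=2$ this step can be made essentially self-contained: near infinity $-\Delta u=fe^{2u}<0$, so $u$ is subharmonic there and $r\overline u'(r)$ is nondecreasing, while on the dyadic annuli $\{r<|x|<2r\}$ the quantity $\int_{\{r<|x|<2r\}}|f|e^{2u}\,\ud x\to 0$ as the tail of a convergent integral, and a Harnack argument then upgrades the control of $\overline u$ to the pointwise bound $u(x)\le\overline u(|x|)+O(1)$, which is what feeds the completeness condition above. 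For $n\ge 4$ the absence of a maximum principle for $(-\Delta)^{n/2}$ is precisely why one has to invoke the higher-order potential-theoretic machinery; apart from that the argument is the same.
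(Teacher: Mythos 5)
Your overall strategy coincides with the paper's: reduce to a normal solution, extract the logarithmic asymptotics of $u$ in an averaged sense, and play Jensen's inequality against the growth $|f|\ge C|x|^{s}$ to contradict $fe^{nu}\in L^{1}(\mr^{n})$; your final step (exponent at least $s-1-\varepsilon>-1$, given $\beta\ge-1$ in your sign convention) is arithmetically the same as the paper's conclusion $\beta\ge 1+\tfrac{s}{n}>1\ge\beta$ (the paper's $\beta$ is the negative of yours). The differences lie in how the two inputs are obtained. The paper observes that $|f|\ge C$ near infinity forces $\int_{\mr^{n}}e^{nu}\,\ud x<\infty$, hence $\tau(g)$ is finite, and then invokes Theorem \ref{thm: normal metric iff}, which delivers normality and the bound $\beta\le 1$ (indeed $\beta=1-\tau(g)$) in one stroke; the Jensen step is then run over solid balls $B_{|x|/2}(x)$ via Lemma \ref{lem:B_r_0|x|}, whereas your spherical-mean version would need its own (straightforward but unproven) analogue of that lemma.

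The genuine soft spot is your normality step. For $n\ge4$ the difference $P=u-\mathcal{L}(fe^{nu})$ is a priori only a polyharmonic function, not a polynomial, so the dichotomy you run (either unbounded above, or tending to $-\infty$ along some ray) is not available until one proves $P$ is a polynomial of controlled degree; that is precisely the nontrivial content of the machinery you defer to, which in this paper is packaged as Theorem \ref{thm: normal metric iff}. Moreover, in the branch ``$P$ unbounded above $\Rightarrow e^{nu}$ blows up on a cone'' you implicitly need a lower bound on $\mathcal{L}(fe^{nu})$ on large sets; since $fe^{nu}\le 0$ near infinity, the pointwise lower bound of Lemma \ref{lem: varphi >0} does not apply, and one must work with averaged bounds as in Lemma \ref{lem:B_r_0|x|}, which your sketch does not do. Both issues (and also your ray-length argument for $\beta\ge-1$, which itself needs a pointwise upper bound on $u$ along rays — available here only because $fe^{nu}\le 0$ near infinity makes the near-field term in Lemma \ref{lem: L(f)} nonpositive) are bypassed by simply noting that the total volume is finite, so $\tau(g)=0$, and citing Theorem \ref{thm: normal metric iff}, exactly as the paper does.
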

Now, we briefly introduce the structure of this paper. In Section \ref{section: integral estimate}, some  results established in \cite{Li 23 Q-curvature} are reviewed for later use.  Subsequently,
we prove Theorem \ref{thm: f geq 1}, Theorem \ref{thm:xnabla log f geq -1}, Theorem \ref{thm: f leq 0} and Theorem \ref{thm: f < 0 near infinity} in Section \ref{section:proof}. Finally, the sharpness of  condition \eqref{condition for f} is discussed.

{\bf Acknowledgment.} The author would like to thank Professor Xingwang Xu, Dong Ye, Xia Huang, Biao Ma for helpful discussions. The author is also deeply grateful to the referee for their valuable suggestions and comments.

\section{Integral estimates and Pohozaev's identity}\label{section: integral estimate}

\begin{lemma}\label{lem: f geq |x|^s}
Given a positive and smooth function $f(x)$ on $\mr^n$.  Supposing  that,    for $|x|\gg1$ and $s\in\mr$, 
$$\frac{x\cdot \nabla f(x)}{f(x)}\geq s,$$
  there holds
$$ f(x)\geq c_0|x|^{s}, \; |x|\gg1$$
where $c_0$ is a positive constant.
\end{lemma}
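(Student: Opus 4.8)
The plan is to reduce the hypothesis, which is a differential inequality on $\mr^n$, to a one-dimensional ODE comparison along rays from the origin. First I would fix $R_0\gg 1$ so that $x\cdot\nabla f(x)\geq s\,f(x)$ holds for all $|x|\geq R_0$. For a unit vector $\theta\in\ms^{n-1}$ set $\phi_\theta(r):=f(r\theta)$ for $r\geq R_0$; since $f$ is smooth and strictly positive, $\phi_\theta$ is a positive smooth function of $r$, and the chain rule gives $\phi_\theta'(r)=\theta\cdot\nabla f(r\theta)=\tfrac1r\,(r\theta)\cdot\nabla f(r\theta)$. Thus the hypothesis becomes $r\,\phi_\theta'(r)\geq s\,\phi_\theta(r)$ for $r\geq R_0$, equivalently $\frac{d}{dr}\log\phi_\theta(r)\geq \frac{s}{r}$.

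Next I would integrate this inequality from $R_0$ to $r$, obtaining $\log\phi_\theta(r)-\log\phi_\theta(R_0)\geq s\log(r/R_0)$; exponentiating yields $f(r\theta)=\phi_\theta(r)\geq \phi_\theta(R_0)\,(r/R_0)^{s}$ for every $r\geq R_0$ and every $\theta\in\ms^{n-1}$.

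Finally, to produce a constant independent of the direction, I would invoke the compactness of the sphere $\{|x|=R_0\}$: since $f$ is continuous and strictly positive, $m:=\min_{|x|=R_0}f(x)>0$, hence $\phi_\theta(R_0)=f(R_0\theta)\geq m$ for all $\theta$. Combining with the previous step gives $f(x)\geq \frac{m}{R_0^{\,s}}\,|x|^{s}$ for all $|x|\geq R_0$, so the lemma holds with $c_0=m\,R_0^{-s}$.

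The argument is essentially routine, and I do not expect a genuine obstacle; the only point needing a little care is that integrating along each ray separately produces a constant a priori depending on the boundary value $f(R_0\theta)$, and this dependence is eliminated by the uniform positive lower bound for $f$ on the compact sphere of radius $R_0$. Note also that the sign of $s$ is irrelevant to the computation.
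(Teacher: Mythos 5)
Your proposal is correct and is essentially the same argument as in the paper: integrate $\frac{d}{dr}\log f(r\theta)\geq s/r$ along rays from radius $R_0$ (the paper's $t_1$) to $|x|$, then take the minimum of $f$ on the sphere $\{|x|=R_0\}$ to get the uniform constant $c_0=R_0^{-s}\min_{|y|=R_0}f(y)$. No issues to report.
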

\begin{proof}
	Based on our assumption, there exists $t_1>0$ such that, for $|x|\geq t_1$,
	$$\frac{x\cdot \nabla f(x)}{f(x)}\geq s.$$
With help of such estimate,   for $|x|>t_1$,  one has 
	\begin{align*}
		\log f(x)-\log f(\frac{t_1}{|x|}x)=&\int_{t_1}^{|x|}\frac{x}{|x|}\cdot\nabla \log f(t\frac{x}{|x|})\ud t\\
		=&\int_{t_1}^{|x|}\left(t\frac{x}{|x|}\cdot\nabla \log f(t\frac{x}{|x|})\right)\frac{1}{t}\ud t\\
		\geq & \int_{t_1}^{|x|}s\frac{1}{t}\ud t\\
		=& s\log |x|-s\log t_1
	\end{align*}
which yields that
$$f(x)\geq t_1^{-s}\left(\min_{|y|=t_1}f(y)\right)|x|^s, \; |x|>t_1.$$
Thus, we finish our proof.
\end{proof}

Recall  the conformally invariant equation
\begin{equation}\label{higher order equation}
	(-\Delta)^{\frac{n}{2}}u(x)=f(x)e^{nu(x)},\; x\in\mr^n.
\end{equation}
We say the solution to \eqref{higher order equation} is  normal  if $u$ satisfies the integral equation
\begin{equation}\label{normal solution}
	u(x)=\frac{2}{(n-1)!|\mathbb{S}^n|}\int_{\mr^n}\log\frac{|y|}{|x-y|}f(y)e^{nu(y)}\ud y+C_0
\end{equation}
where $C_0$ is a constant.
For  more details about normal solutions, one may refer to Section 2 of \cite{Li 23 Q-curvature}.

Given a function $\varphi(x)\in L^\infty_{loc}(\mr^n)\cap L^1(\mr^n)$, we can define the logarithmic potential
$$\mathcal{L}(\varphi)(x):=\frac{2}{(n-1)!|\mathbb{S}^n|}\int_{\mr^n}\log\frac{|y|}{|x-y|}\varphi(y)\ud y.$$
For brevity, we set  the notation  $\alpha$ defined as
$$\alpha:=\frac{2}{(n-1)!|\mathbb{S}^n|}\int_{\mr^n}\varphi(y)\ud y.$$
Meanwhile,  $B_r(p)$ denotes the Euclidean ball with radius $r$ centered at $p\in \mr^n$ and $|B_r(p)|$ denotes its volume respect to standard Euclidean metric.

The following lemmas related to the properties of $\mathcal{L}(\varphi)$ have  been established in \cite{Li 23 Q-curvature} and we repeat the proofs for readers' convenience.

\begin{lemma} \label{lem: L(f)}
	For $|x|\gg1$, there holds
	\begin{equation}\label{Lf=-aplha log x+}
		\mathcal{L}(\varphi)(x)=(-\alpha+o(1))\log|x|+\frac{2}{(n-1)!|\mathbb{S}^n|}\int_{B_1(x)}\log\frac{1}{|x-y|}\varphi(y)\ud y
	\end{equation}
	where  $o(1)\to 0$ as $|x|\to\infty$.
\end{lemma}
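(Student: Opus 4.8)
The plan is to estimate the logarithmic potential
\[
\mathcal{L}(\varphi)(x)=c_n\int_{\mr^n}\log\frac{|y|}{|x-y|}\,\varphi(y)\,\ud y,
\qquad c_n:=\frac{2}{(n-1)!|\ms^n|},
\]
by decomposing $\mr^n$ into three pieces according to the location of $y$ relative to $x$, and on each piece extracting the leading term $-\alpha\log|x|$ through the elementary identity
\[
\log\frac{|y|}{|x-y|}=-\log|x|+\log\frac{|x|\,|y|}{|x-y|}.
\]
Two soft properties of $\varphi$ will be used repeatedly: since $\varphi\in L^1(\mr^n)$, both $\int_{\{|y|\ge\rho\}}|\varphi|\,\ud y\to0$ as $\rho\to\infty$ and $\int_{B_1(x)}|\varphi|\,\ud y\to0$ as $|x|\to\infty$; and since $\varphi\in L^\infty_{loc}(\mr^n)$, the quantity $\int_{\{|y|<R\}}\bigl|\log|y|\bigr|\,|\varphi|\,\ud y$ is finite for every fixed $R>0$. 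Recall also that $\alpha=c_n\int_{\mr^n}\varphi\,\ud y$.

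Fix $|x|$ large, so that $\mr^n$ is the disjoint union of $B_1(x)$, the far region $\Omega_\infty:=\{|y|\ge 2|x|\}$, and the middle region $\Omega:=\{\,|x-y|\ge1\,\}\cap\{\,|y|<2|x|\,\}$. First I would dispose of $\Omega_\infty$: there $|x-y|$ is comparable to $|y|$, hence $\bigl|\log\tfrac{|y|}{|x-y|}\bigr|\le\log2$, and the $L^1$-tail bound makes the contribution of $\Omega_\infty$ an $o(1)$. Next, on $B_1(x)$, where $\bigl||y|-|x|\bigr|\le1$, I would write $\log\tfrac{|y|}{|x-y|}=\log\tfrac1{|x-y|}+\log|y|$ and use $\log|y|=\log|x|+o(1)$ uniformly on $B_1(x)$ together with $\int_{B_1(x)}|\varphi|=o(1)$; this reproduces exactly the boundary term $c_n\int_{B_1(x)}\log\tfrac1{|x-y|}\varphi(y)\,\ud y$ in the statement, up to an $o(\log|x|)$ error.

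The middle region $\Omega$ carries the main term and is where I expect the only real difficulty. By the identity above, $c_n\int_\Omega\log\tfrac{|y|}{|x-y|}\varphi=-\log|x|\cdot c_n\int_\Omega\varphi+c_n\int_\Omega\log\tfrac{|x|\,|y|}{|x-y|}\varphi$; since $\Omega$ is the complement of $B_1(x)\cup\Omega_\infty$, the tail bounds give $c_n\int_\Omega\varphi=\alpha+o(1)$, so the first summand equals $-\alpha\log|x|+o(\log|x|)$. The delicate point is to show $c_n\int_\Omega\log\tfrac{|x|\,|y|}{|x-y|}\varphi=o(\log|x|)$: because $\varphi$ carries no decay assumption one can only hope for an $o(\log|x|)$, not an $o(1)$, bound here, and I would prove it by inserting an auxiliary fixed radius $R$. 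On $\Omega\cap\{|y|\ge R\}$ the crude bound $\bigl|\log\tfrac{|x|\,|y|}{|x-y|}\bigr|\le 3\log|x|$ (valid since $1\le|x-y|<3|x|$ and $R\le|y|<2|x|$) gives a contribution of size at most $3\log|x|\cdot c_n\int_{\{|y|\ge R\}}|\varphi|$; on $\Omega\cap\{|y|<R\}$ the estimate $|x-y|=|x|\bigl(1+o(1)\bigr)$ turns $\log\tfrac{|x|\,|y|}{|x-y|}$ into $\log|y|+o(1)$, whose integral against $\varphi$ is $O_R(1)$. Dividing by $\log|x|$, letting $|x|\to\infty$ and then $R\to\infty$, this term is $o(\log|x|)$. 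Collecting the three contributions yields $\mathcal{L}(\varphi)(x)=-\alpha\log|x|+o(\log|x|)+c_n\int_{B_1(x)}\log\tfrac1{|x-y|}\varphi(y)\,\ud y$, which is \eqref{Lf=-aplha log x+}.
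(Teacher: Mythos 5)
Your proposal is correct and follows essentially the same route as the paper: split $\mr^n$ into $B_1(x)$, a bulk region, and a far region, extract the leading term via $\log\frac{|y|}{|x-y|}=-\log|x|+\log\frac{|x|\,|y|}{|x-y|}$, and use the $L^1$-tail of $\varphi$ together with $\varphi\in L^\infty_{loc}$ near the origin. The only minor difference is the choice of bulk region: the paper takes $B_{\log|x|}(0)$, so the error there is immediately $O(\log\log|x|)$, while your region $\{|y|<2|x|,\ |x-y|\ge 1\}$ requires the auxiliary radius $R$ and the double limit ($|x|\to\infty$, then $R\to\infty$) — both yield the required $o(\log|x|)$ bound.
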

\begin{proof}
	
	Choose $|x|\geq e^4$ such  that $|x|\geq 2\log|x|$. Split $\mr^n$ into three pieces
	$$A_1=B_{1}(x), \quad A_2=B_{\log|x|}(0),\quad A_3=\mr^n\backslash (A_1\cup A_2).$$
	For $y\in A_2$ and $|y|\geq2$ , we have $|\log\frac{|x|\cdot|y|}{|x-y|}|\leq \log(2\log|x|)$.
	Respectively,  for $|y|\leq 2$, $ |\log\frac{|x|\cdot|y|}{|x-y|}|\leq |\log|y||+C$.
	Thus 
	\begin{equation}\label{A_2}
		|\int_{A_2}\log\frac{|y|}{|x-y|}\varphi(y)\ud y+\log|x|\int_{A_2}\varphi(y)\ud y|\leq C\log\log|x|+C=o(1)\log|x|.
	\end{equation}
	For $y\in A_3$, it is not hard to check 
	$$\frac{1}{|x|+1}\leq\frac{|y|}{|x-y|}\leq |x|+1.$$ With help  of this estimate, we could  control the integral over $A_3$ as
	\begin{equation}\label{A_3}
		|\int_{A_3}\log\frac{|y|}{|x-y|}\varphi(y)\ud y|\leq \log(|x|+1)\int_{A_3}|\varphi|\ud y.
	\end{equation}
	For $y\in B_1(x)$, one has
	$1\leq |y|\leq |x|+1$ and then
	$$|\int_{A_1}\log|y|\varphi\ud y|\leq \log(|x|+1)\int_{A_1}|\varphi|\ud y.$$
	Since $f\in L^1(\mr^n)$, notice that $\int_{A_3\cup A_1}|\varphi|\ud y\to 0$ as $|x|\to \infty$ and 
	$$\frac{2}{(n-1)!|\mathbb{S}^n|}\int_{A_2}\varphi(y)\ud y=\alpha+o(1).$$
	Thus there holds
	\begin{equation}\label{u =-aplha log x1}
		\mathcal{L}(\varphi)(x)=(-\alpha+o(1))\log|x|+\frac{2}{(n-1)!|\mathbb{S}^n|}\int_{B_1(x)}\log\frac{1}{|x-y|}\varphi(y)\ud y.
	\end{equation}
\end{proof}

\begin{lemma}\label{lem:B_r_0|x|}
	For  $0<r_1<1$ fixed and $|x|\gg1$, there holds
	\begin{equation}\label{B_|x|/2v}
		\frac{1}{|B_{r_1|x|}(x)|}\int_{B_{r_1|x|}(x)}\mathcal{L}(\varphi)(y)\ud y=(-\alpha +o(1))\log|x|.
	\end{equation}
\end{lemma}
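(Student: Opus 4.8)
The plan is to leverage Lemma \ref{lem: L(f)}, which already gives the pointwise expansion $\mathcal{L}(\varphi)(x) = (-\alpha + o(1))\log|x| + \frac{2}{(n-1)!|\mathbb{S}^n|}\int_{B_1(x)}\log\frac{1}{|x-y|}\varphi(y)\,\ud y$ for $|x|\gg 1$. First I would average this identity over $y \in B_{r_1|x|}(x)$. For $y$ in this ball with $|x|$ large, $\log|y| = \log|x| + o(\log|x|)$ uniformly (since $|y|/|x| \in [1-r_1, 1+r_1]$), so the term $(-\alpha+o(1))\log|y|$ contributes $(-\alpha+o(1))\log|x|$ to the average. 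The whole point is therefore to show that the averaged local logarithmic term is $o(1)\log|x|$, i.e.
\begin{equation}\label{eq:plan-local-term}
\frac{1}{|B_{r_1|x|}(x)|}\int_{B_{r_1|x|}(x)}\left(\int_{B_1(y)}\log\frac{1}{|y-z|}\varphi(z)\,\ud z\right)\ud y = o(1)\log|x|.
\end{equation}

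To establish \eqref{eq:plan-local-term}, I would swap the order of integration via Fubini. The inner double integral becomes $\int_{\mr^n}\varphi(z)\left(\int_{B_{r_1|x|}(x)\cap B_1(z)}\log\frac{1}{|y-z|}\,\ud y\right)\ud z$. The key observation is that the kernel $\int_{B_1(z)}|\log|y-z||\,\ud y$ (or the truncated version) is a fixed finite constant $C_n$ independent of $z$ and $x$, since $\log\frac{1}{|y-z|}$ is integrable near the singularity in dimension $n\geq 2$. Hence the absolute value of the numerator in \eqref{eq:plan-local-term} is bounded by $C_n \|\varphi\|_{L^1(\mr^n)}$, a constant. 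Dividing by the volume $|B_{r_1|x|}(x)| = \omega_n (r_1|x|)^n \to \infty$, the left side is $O(|x|^{-n}) = o(1)$, which is certainly $o(1)\log|x|$. Combining with the $\log|y|$ term handled above yields \eqref{B_|x|/2v}.

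The main technical point — really the only one requiring care — is making the Fubini swap rigorous and confirming the uniform bound on the truncated logarithmic kernel; one must check that the integrand $\log\frac{1}{|y-z|}\mathbf{1}_{B_1(z)}(y)\varphi(z)$ is absolutely integrable on $B_{r_1|x|}(x)\times\mr^n$, which follows from $\varphi\in L^1$ together with local integrability of the logarithm. A minor bookkeeping issue is that in Lemma \ref{lem: L(f)} the local integral is over $B_1(x)$ while here we center balls $B_1(y)$ at the running point $y$; but the estimate $1 \leq |y| \leq |x|+1$ used there applies equally on $B_{r_1|x|}(x)$ once $|x|$ is large, so the same expansion is valid at each such $y$ with a uniform $o(1)$. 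Everything else is routine.
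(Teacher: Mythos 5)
Your proposal is correct and follows essentially the same route as the paper: average the pointwise expansion from Lemma \ref{lem: L(f)} over $B_{r_1|x|}(x)$, use $|\log(|y|/|x|)|\leq C$ on that ball to replace $\log|y|$ by $\log|x|$, and dispose of the averaged near-diagonal term by Fubini. The only (harmless) difference is that you bound that term via the dimensional constant $\int_{B_1(0)}\log\frac{1}{|w|}\,\ud w$, obtaining $O(|x|^{-n})$, while the paper uses $\log\frac{1}{t}\leq \frac{1}{t}$ and gets $O(|x|^{-1})$; both are $o(1)$ and suffice.
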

\begin{proof}
	By a direct computation and Fubini's theorem, one has 
	\begin{align*}
		&\int_{B_{r_1|x|}(x)}|\int_{B_1(z)}\log\frac{1}{|z-y|}\varphi(y)\ud y|\ud z\\
		\leq & 	\int_{B_{r_1|x|}(x)}\int_{B_1(z)}\frac{1}{|z-y|}|\varphi(y)|\ud y\ud z\\
		\leq &\int_{B_{r_1|x|}(x)}\int_{B_{r_1|x|+1}(x)}\frac{1}{|z-y|}|\varphi(y)|\ud y\ud z\\
		\leq &\int_{B_{r_1|x|+1}(x)}|\varphi(y)|\int_{B_{r_1|x|}(x)}\frac{1}{|z-y|}\ud z\ud y\\
		\leq & \int_{B_{r_1|x|+1}(x)}|\varphi(y)|\int_{B_{2r_1|x|+1}(0)}\frac{1}{|z|}\ud z\ud y\\
		\leq &C|x|^{n-1}.
	\end{align*}
	Thus 
	\begin{equation}\label{B_r_0|x|Q}
		\frac{1}{|B_{r_1|x|}(x)|}\int_{B_{r_1|x|}(x)}\int_{B_1(z)}\log\frac{1}{|z-y|}\varphi(y)\ud y\ud z=O(|x|^{-1}).
	\end{equation}
	Meanwhile, for  $y\in B_{r_1|x|}(x)$, there holds
	\begin{equation}\label{log|x|/|x_0|}
		|\log\frac{|y|}{|x|}|\leq \log\frac{1}{1-r_1}+\log(1+r_1)\leq C.
	\end{equation}
	With help of these estimates \eqref{B_r_0|x|Q}, \eqref{log|x|/|x_0|} and 
	Lemma \ref{lem: L(f)},  we have
	\begin{equation}\label{int B_|x|/2 v}
		\frac{1}{|B_{r_1|x|}(x)|}\int_{B_{r_1|x|}(x)}\mathcal{L}(\varphi)(y)\ud y=(-\alpha+o(1))\log|x|.
	\end{equation}
\end{proof}

\begin{lemma}\label{lem: varphi >0}
If $\varphi\geq 0$ near infinity, for $|x|\gg1$, there holds
	$$\mathcal{L}(\varphi)(x)\geq -\alpha\log|x|-C.$$
\end{lemma}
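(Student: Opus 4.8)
The plan is to recast the claim as a lower bound for the logarithmic kernel after subtracting its leading term. Unwinding the definitions of $\mathcal{L}(\varphi)$ and $\alpha$ gives
\[
\mathcal{L}(\varphi)(x)+\alpha\log|x|=\frac{2}{(n-1)!|\ms^n|}\int_{\mr^n}\log\frac{|x|\cdot|y|}{|x-y|}\,\varphi(y)\,\ud y,
\]
so it is enough to bound the integral on the right from below by a constant when $|x|\gg1$. It is worth noting at the outset that one should not try to derive this from Lemma~\ref{lem: L(f)}: the error term there is only $o(1)\log|x|$, which need not be bounded below, so the sign hypothesis on $\varphi$ near infinity must be used directly.

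First I would localize the places where the kernel $\log\frac{|x|\cdot|y|}{|x-y|}$ is negative. Since $(|x|-1)(|y|-1)\ge1$ whenever $|x|,|y|\ge2$, we have $|x-y|\le|x|+|y|\le|x|\cdot|y|$ and hence $\log\frac{|x|\cdot|y|}{|x-y|}\ge0$ for such $x,y$. Choosing $R_0\ge2$ with $\varphi\ge0$ on $\{|y|\ge R_0\}$, I would split the integral over $B_{R_0}(0)$ and its complement. On the complement the integrand is, for $|x|\ge2$, a product of two nonnegative factors, so that part is nonnegative and can simply be discarded in a lower bound.

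What remains is to estimate $\int_{B_{R_0}(0)}\log\frac{|x|\cdot|y|}{|x-y|}\varphi(y)\,\ud y$ from below, uniformly for $|x|\gg1$. Writing the kernel as $\log|y|+\log\frac{|x|}{|x-y|}$ and using $|x|-R_0\le|x-y|\le|x|+R_0$ on $B_{R_0}(0)$, the second summand is $O(1)$ once $|x|\ge2R_0$, so on $B_{R_0}(0)$ the kernel is bounded by $|\log|y||+C$; as $\varphi\in L^{\infty}_{loc}$ and $\log|y|$ is integrable near the origin (here $n\ge2$), this integral is bounded in absolute value by a constant independent of $x$. Adding the two contributions then yields $\mathcal{L}(\varphi)(x)\ge-\alpha\log|x|-C$. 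The only step that is not pure bookkeeping is this last uniform-in-$x$ bound over the fixed ball $B_{R_0}(0)$, and it is mild — it rests on the local boundedness of $\varphi$ and the local integrability of $\log|y|$, not on any decay of $\varphi$.
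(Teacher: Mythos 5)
Your proof is correct and follows essentially the same route as the paper: both rewrite $\mathcal{L}(\varphi)(x)+\alpha\log|x|$ as an integral against a combined logarithmic kernel, exploit the kernel's nonnegativity on the region where $\varphi\geq0$, and bound the contribution of a fixed compact set uniformly in $x$ using $\varphi\in L^\infty_{loc}\cap L^1$ and the local integrability of $\log|y|$. The only difference is cosmetic: the paper regularizes the kernel to $\log\frac{|x|(|y|+1)}{|x-y|}$ (nonnegative for all $y$ when $|x|\geq1$) and splits $\varphi$ into $\varphi^{\pm}$, whereas you keep the bare kernel and split the domain at a fixed radius $R_0$.
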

\begin{proof}
	By a direct computation, we have
\begin{align*}
	&\frac{(n-1)!|\mathbb{S}^n|}{2}(\mathcal{L}(\varphi)(x)+\alpha\log|x|)\\
	=&\int_{\mr^n}\log\frac{|x|\cdot(|y|+1)}{|x-y|}\varphi(y)\ud y+\int_{\mr^n}\log\frac{|y|}{|y|+1}\varphi(y)\ud y\\
	=&\int_{\mr^n}\log\frac{|x|\cdot(|y|+1)}{|x-y|}\varphi^+(y)\ud y-\int_{\mr^n}\log\frac{|x|\cdot(|y|+1)}{|x-y|}\varphi^-(y)\ud y\\
	&+\int_{\mr^n}\log\frac{|y|}{|y|+1}\varphi(y)\ud y.
\end{align*}
For $|x|\geq 1 $, it is easy to check that 
$$\frac{|x|\cdot(|y|+1)}{|x-y|}\geq 1$$
which shows that 
$$\log\frac{|x|\cdot(|y|+1)}{|x-y|}\geq 0.$$
Immediately, one has
$$\int_{\mr^n}\log\frac{|x|\cdot(|y|+1)}{|x-y|}\varphi^+(y)\ud y\geq 0.$$

Based on our assumption, $\varphi^-$ has compact support, there exists  $R_1>0$ such that $supp(\varphi^-)\subset B_{R_1}(0)$.
And for $|x|\geq 2R_1$, we have
\begin{align*}
	&\int_{\mr^n}\log\frac{|x|\cdot(|y|+1)}{|x-y|}\varphi^-(y)\ud y\\
	=&\int_{B_{R_1}(0)}\log\frac{|x|\cdot(|y|+1)}{|x-y|}\varphi^-(y)\ud y\\
	\leq &\log\left(2|R_1|+2\right)\int_{B_{R_1}(0)}\varphi^-(y)\ud y\\
	\leq &C
\end{align*}
where we use the fact
$\frac{|x|}{|x-y|}\leq 2$ for $|x|\geq 2R_1$ and $y\in B_{R_1}(0)$.

Since $\varphi \in L^\infty_{loc}(\mr^n)\cap L^1(\mr^n)$,  one has
\begin{align*}
	|\int_{\mr^n}\log\frac{|y|}{|y|+1}\varphi(y)\ud y|\leq &|\int_{B_2(0)}\log\frac{|y|}{|y|+1}\varphi(y)\ud y|+|\int_{\mr^n\backslash B_2(0)}\log\frac{|y|}{|y|+1}\varphi(y)\ud y|\\
	\leq & C\int_{B_2(0)}|\log|y||\ud y+C\int_{B_2(0)}|\log(|y|+1)|\ud y\\
	&+\log\frac{3}{2}\int_{\mr^n\backslash B_2(0)}|\varphi(y)|\ud y\\
	\leq &C.
\end{align*}
Combining these estimates, for $|x|\gg1$, we obtain that
$$\mathcal{L}(\varphi)(x)\geq -\alpha\log|x|-C.$$

\end{proof}

\begin{lemma}\label{lem: Delta Lf}
	For $R\gg1$, there holds
	$$\int_{B_R(0)}|\mathcal{L}(\varphi)(x)|\ud x=O((\log R)\cdot R^n).$$
\end{lemma}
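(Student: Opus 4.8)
The plan is to bound $\int_{B_R(0)}|\mathcal{L}(\varphi)(x)|\,\ud x$ directly from the defining integral formula for $\mathcal{L}(\varphi)$, by splitting the logarithmic kernel into its far-field part and its near-diagonal part and applying Fubini's theorem to each. Write
$$
\frac{(n-1)!|\mathbb{S}^n|}{2}\,\mathcal{L}(\varphi)(x)=\int_{\mr^n}\log\frac{|y|}{|x-y|}\varphi(y)\,\ud y
=\int_{\mr^n}\log|y|\,\varphi(y)\,\ud y-\int_{\mr^n}\log|x-y|\,\varphi(y)\,\ud y,
$$
and treat the two pieces separately (the first term is a constant, contributing $O(R^n)$ after integrating over $B_R(0)$, since $\varphi\in L^1\cap L^\infty_{loc}$ forces $\log|y|\,\varphi\in L^1$). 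For the second piece, first I would split $\log|x-y|=\log^+|x-y|-\log^-|x-y|$. On the region $|x-y|\geq 1$ one has $0\le \log|x-y|\le \log(|x|+|y|)\le \log(2R)+\log(|y|+1)$ for $x\in B_R(0)$, so that part of the integral over $B_R(0)$ is bounded by $C(\log R)\int_{B_R(0)}\ud x\cdot\|\varphi\|_{L^1}+C\cdot\text{(const)}=O((\log R)R^n)$.

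The genuinely delicate piece is the near-diagonal contribution $\int_{B_R(0)}\int_{|x-y|\le 1}\bigl|\log|x-y|\bigr|\,|\varphi(y)|\,\ud y\,\ud x$. Here I would apply Fubini to integrate in $x$ first: for fixed $y$,
$$
\int_{B_R(0)}\mathbf{1}_{\{|x-y|\le 1\}}\bigl|\log|x-y|\bigr|\,\ud x\le \int_{B_1(0)}\bigl|\log|z|\bigr|\,\ud z=:C_n<\infty,
$$
a finite dimensional constant since $\log|z|\in L^1(B_1(0))$. Hence this term is bounded by $C_n\int_{\mr^n}|\varphi(y)|\,\ud y=C_n\|\varphi\|_{L^1}$, which is $O(1)$, in particular $O((\log R)R^n)$. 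Combining the three estimates gives $\int_{B_R(0)}|\mathcal{L}(\varphi)(x)|\,\ud x=O((\log R)R^n)$ as claimed.

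The only point requiring care — and the main "obstacle," though it is a mild one — is making sure the Fubini swap is legitimate and that no growth in $|y|$ is lost when estimating the $|x-y|\ge 1$ part: the bound $\log|x-y|\le \log(2R)+\log(|y|+1)$ is what keeps the $|y|$-integral convergent (using $\log(|y|+1)\,|\varphi(y)|\in L^1$, which follows from $\varphi\in L^1$ together with the integrability of $\log|y|\,\varphi$ near infinity), while the $|x|$-integration merely contributes the volume factor $|B_R(0)|=O(R^n)$ and the explicit $\log R$. One should also note this lemma is only needed in the regime $R\gg 1$, so there is no issue with small-$R$ behavior of $\log R$.
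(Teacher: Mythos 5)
Your decomposition has a genuine gap at the very first step. You split the kernel globally as $\log\frac{|y|}{|x-y|}=\log|y|-\log|x-y|$ over all of $\mr^n$, and both of the resulting pieces are controlled only under the assumption that $\log(|y|+1)\,|\varphi(y)|\in L^1(\mr^n)$ (you invoke it once as ``$\varphi\in L^1\cap L^\infty_{loc}$ forces $\log|y|\,\varphi\in L^1$'' and again when absorbing $\log(|y|+1)$ in the far-field bound). This implication is false: the lemma only assumes $\varphi\in L^\infty_{loc}(\mr^n)\cap L^1(\mr^n)$, and a function such as $\varphi(y)=|y|^{-n}(\log|y|)^{-2}$ for $|y|\geq 2$ is in $L^1\cap L^\infty_{loc}$ while $\int_{\mr^n}\log|y|\,|\varphi(y)|\,\ud y=\infty$. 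For such $\varphi$ the two integrals $\int\log|y|\,\varphi\,\ud y$ and $\int\log|x-y|\,\varphi\,\ud y$ each diverge, so your ``constant first term'' is not even defined; the potential $\mathcal{L}(\varphi)$ itself is finite only because the \emph{combined} kernel $\log\frac{|y|}{|x-y|}$ stays bounded as $|y|\to\infty$ for $x$ in a bounded set. So the global splitting is not just missing a justification; it is the wrong starting move under the stated hypotheses.

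The fix is the one the paper uses: split the $y$-integration domain before splitting the logarithm. For $x\in B_R(0)$ and $y\in\mr^n\setminus B_{2R}(0)$ one has $\frac12\leq\frac{|y|}{|x-y|}\leq 2$, so on the far region the combined kernel is bounded by $\log 2$ and that contribution is at most $CR^n\|\varphi\|_{L^1}$, with no extra weight in $y$ needed. Only on the bounded region $y\in B_{2R}(0)$ does one separate $\log|y|$ and $\log|x-y|$: there $|\log|y||\leq C+\log(2R)$ away from the origin (and $\log|y|$ is integrable against $\varphi\in L^\infty_{loc}$ near the origin), giving $CR^n\log R$, while the near-diagonal term is handled exactly as in your last step by Fubini and $\int_{B_{3R}(0)}|\log|z||\,\ud z\leq CR^n\log R$. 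That part of your argument (the Fubini swap and the integrability of $\log|z|$ near $z=0$) is fine; the missing idea is the cutoff of the $y$-domain at scale $2R$ that keeps the two halves of the logarithm from being estimated separately at infinity.
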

\begin{proof}
	A direct computation and 	 Fubini's theorem yield that
	\begin{align*}
		&\int_{B_R(0)}|\mathcal{L}(\varphi )|\ud x\\
		\leq &C\int_{B_R(0)}\int_{\mr^n\backslash B_{2R}(0)}|\log\frac{ |y|}{|x-y|}|\cdot|\varphi(y)|\ud y\ud x\\
		&+C\int_{B_R(0)}\int_{ B_{2R}(0)}|\log\frac{ |y|}{|x-y|}|\cdot|\varphi(y)|\ud y\ud x\\
		\leq &C\int_{B_R(0)}\int_{\mr^n\backslash B_{2R}(0)}|\log\frac{ |y|}{|x-y|}|\cdot|\varphi(y)|\ud y\ud x\\
		&+C\int_{B_R(0)}\int_{ B_{2R}(0)}|\log|y||\cdot|\varphi(y)|\ud y\ud x\\
		&+C\int_{B_R(0)}\int_{ B_{2R}(0)}|\log|x-y||\cdot|\varphi(y)|\ud y\ud x.
	\end{align*}
We deal with these three terms one by one.
For $|x|\leq R$ and $y\in \mr^n\backslash B_{2R}(0)$, it is easy to verify that 
 $$\frac{1}{2}\leq \frac{|y|}{|x-y|}\leq 2.$$
  With help of this  fact, the first term can be controlled as follows
  \begin{align*}
  &	\int_{B_R(0)}\int_{\mr^n\backslash B_{2R}(0)}|\log\frac{ |y|}{|x-y|}|\cdot|\varphi(y)|\ud y\ud x\\
  \leq & \log 2\int_{B_R(0)}\int_{\mr^n\backslash B_{2R}(0)}|\varphi(y)|\ud y\ud x\\
  \leq&  CR^n.
  \end{align*}

As for the second term, one has
\begin{align*}
	&\int_{B_R(0)}\int_{ B_{2R}(0)}|\log|y||\cdot|\varphi(y)|\ud y\ud x\\
	\leq &\int_{B_R(0)}\int_{ B_{1}(0)}|\log|y||\cdot|\varphi(y)|\ud y\ud x\\
	&+\int_{B_R(0)}\int_{ B_{2R}(0)\backslash B_1(0)}|\log|y||\cdot|\varphi(y)|\ud y\ud x\\
	\leq &CR^n\int_{ B_{1}(0)}|\log|y||\cdot|\varphi(y)|\ud y\\
	&+CR^n\int_{ B_{2R}(0)\backslash B_1(0)}|\log|y||\cdot|\varphi(y)|\ud y\\
	\leq &CR^n+CR^n\log(2R)\int_{ B_{2R}(0)\backslash B_1(0)}|\varphi(y)|\ud y\\
	\leq &CR^n\log R
\end{align*}

Finally, the last term can be dealt with by Funibin's theorem.
\begin{align*}
	&\int_{B_R(0)}\int_{ B_{2R}(0)}|\log|x-y||\cdot|\varphi(y)|\ud y\ud x\\
	\leq &\int_{B_{2R}(0)} |\varphi(y)| \ud y\int_{B_{3R}(0)}|\log|z||\ud z\\
	\leq &CR^n\log R.
\end{align*}

Combining these estimates, one has
$$\int_{B_R(0)}|\mathcal{L}(\varphi)(x)|\ud x=O((\log R)\cdot R^n).$$
\end{proof}

We say the conformal  metric $g=e^{2u}|dx|^2$ on $\mr^n$  with finite total Q-curvature is a normal metric if $u$ is a normal solution to \eqref{Q curvature equa}.  To characterize the normal metric,  a volume entropy $\tau(g)$ is introduced in \cite{Li 23 Q-curvature} which is  defined as 
$$\tau(g):=\lim_{R\to\infty}\sup\frac{\log \int_{B_R(0)}e^{nu}\ud x}{\log|B_R(0)|}.$$

\begin{theorem}\label{thm: normal metric iff}
	{\bf (Theorem 1.1 in \cite{Li 23 Q-curvature})} Consider a complete metric $g=e^{2u}|dx|^2$ with finite total Q-curvature on $\mr^n$ where $n\geq 2$ is an even integer. The metric $g$ is normal if and only if $\tau(g)$ is finite. Moreover, if $\tau(g)$ is finite, one has
	$$\tau(g)=1-\frac{2}{(n-1)!|\mathbb{S}^n|}\int_{\mr^n}Q_ge^{nu}\ud x.$$
\end{theorem}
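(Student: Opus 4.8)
The plan is to reduce the statement to the structural representation $u=\mathcal L(\varphi)+p$ with $p$ a polynomial, and then to play the completeness of $g$ against the finiteness of $\tau(g)$. Here $\varphi:=Q_ge^{nu}$ lies in $L^1(\mr^n)$ by the finite total Q-curvature hypothesis and in $L^\infty_{loc}(\mr^n)$ since $u$ is smooth, so Lemmas \ref{lem: L(f)}--\ref{lem: Delta Lf} apply to $\mathcal L(\varphi)$, and $w:=u-\mathcal L(\varphi)$ is polyharmonic. If $g$ is normal this $w$ is constant by definition. If $\tau(g)$ is finite, then $\int_{B_R(0)}e^{nu}$ is bounded by a fixed power of $R$, hence so is $\int_{B_R(0)}u^+$ (using $nu\le e^{nu}$); combining this one-sided polynomial growth with $\int_{B_R(0)}|\mathcal L(\varphi)|=O(R^n\log R)$ from Lemma \ref{lem: Delta Lf} and the generalized mean value property for polyharmonic functions, a Liouville-type argument (compare \cite{Lin}, \cite{Mar MZ}) forces $w$ to coincide with a polynomial $p$. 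Thus in either case $u=\mathcal L(\varphi)+p$, and everything reduces to analysing $p$.

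Suppose $\tau(g)$ is finite; the goal is $p$ constant. If $p\to-\infty$ along some ray $x=r\omega$, then $e^{np}$ and hence $\varphi$ are negligible near that ray, so by Lemma \ref{lem: L(f)} we would get $u(r\omega)\le(C+o(1))\log r+p(r\omega)\to-\infty$, whence $\int^{\infty}e^{u(r\omega)}\,\ud r<\infty$ and the radial ray has finite $g$-length, contradicting completeness; hence the leading homogeneous part $p_d$ of $p$ is nonnegative on $\mathbb S^{n-1}$. On the other hand, if $p$ were non-constant then $p_d\not\equiv0$, so $p\ge c_0|x|^d$ on a solid cone $\mathcal C$ with $d\ge1$; there $\mathcal L(\varphi)\ge-C\log|x|-C$ (split $\varphi=\varphi^+-\varphi^-$, use Lemma \ref{lem: varphi >0} on $\varphi^+$ and $\varphi^-$, and use $\varphi\in L^1$ to bound the local term in Lemma \ref{lem: L(f)}), so $\int_{B_R(0)}e^{nu}\ge\int_{\mathcal C\cap B_R(0)}e^{nu}$ would grow faster than every power of $R$, i.e. $\tau(g)=+\infty$, a contradiction. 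Hence $p$ is constant and $u$ is normal, proving ``$\tau(g)$ finite $\Rightarrow g$ normal''.

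For the converse and the explicit value, assume $g$ is normal, so $u=\mathcal L(\varphi)+C_0$ with $C_0$ constant, and put $\alpha:=\frac{2}{(n-1)!|\ms^n|}\int_{\mr^n}Q_ge^{nu}\,\ud x$. Completeness forces $\alpha\le1$: along the radial ray $e^u$ decays like $|x|^{-\alpha}$ up to a slowly varying factor (this uses Lemma \ref{lem: L(f)} and the $L^1$-bound on $\varphi$ to control the local term), so $\alpha>1$ would make that ray have finite $g$-length. The lower bound $\tau(g)\ge1-\alpha$ follows from Lemma \ref{lem:B_r_0|x|}: averaging $u$ over a ball $B_{|x|/2}(x)$ with $|x|=R$ gives average $(-\alpha+o(1))\log R+O(1)$, and Jensen's inequality on that ball yields $\int_{B_{2R}(0)}e^{nu}\ge cR^{\,n(1-\alpha)+o(1)}$. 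The upper bound $\tau(g)\le1-\alpha$ follows by splitting $\varphi=\varphi^+-\varphi^-$: Lemma \ref{lem: varphi >0} gives $u(x)\le\mathcal L(\varphi^+)(x)+\alpha^-\log|x|+C$ with $\alpha^-=\frac{2}{(n-1)!|\ms^n|}\int_{\mr^n}\varphi^-\,\ud x$, Lemma \ref{lem: L(f)} gives $\mathcal L(\varphi^+)(x)=(-\alpha^++o(1))\log|x|+\ell^+(x)$ with $\ell^+$ a purely local logarithmic potential and $\alpha^+-\alpha^-=\alpha$, and since $\|\varphi^+\|_{L^1(B_2(x_0))}\to0$ as $|x_0|\to\infty$ a Jensen--Brezis--Merle estimate gives $\int_{B_1(x_0)}e^{n\ell^+}\le C$ uniformly, so summing over unit balls covering $B_R(0)$ gives $\int_{B_R(0)}e^{nu}\le CR^{\,n(1-\alpha)+o(1)}$. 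Hence $\tau(g)=1-\alpha=1-\frac{2}{(n-1)!|\ms^n|}\int_{\mr^n}Q_ge^{nu}\,\ud x<\infty$, which closes the equivalence.

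The main obstacle is the passage from the polyharmonic $w$ to an honest polynomial: polyharmonic functions of exponential growth exist, so this step genuinely needs $\tau(g)<\infty$, and since only one-sided growth of $u$ is controlled a priori it must be handled carefully through the mean value property and a Liouville theorem. A second delicate point is the dichotomy in the second paragraph, where turning ``$p_d\ge0$, $p_d\not\equiv0$'' into super-polynomial growth of $\int_{B_R(0)}e^{nu}$ requires the lower bound for $\mathcal L(\varphi)$ to persist on the cone where $p$ is large, with $\varphi$ controlled only in $L^1$. Once these are settled, the evaluation of $\tau(g)$ is comparatively routine, being the asymptotics of Lemmas \ref{lem: L(f)}--\ref{lem: varphi >0} together with Jensen's inequality and a Brezis--Merle estimate.
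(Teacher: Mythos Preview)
This theorem is not proved in the present paper: it is quoted verbatim as Theorem~1.1 of \cite{Li 23 Q-curvature}, and the surrounding Lemmas~\ref{lem: L(f)}--\ref{lem: Delta Lf} are reproduced here only as tools for the later applications, not to reprove the quoted result. So there is no ``paper's own proof'' to compare against; your outline is an independent reconstruction, and its architecture (decompose $u=\mathcal L(\varphi)+w$, upgrade $w$ to a polynomial via a one-sided polyharmonic Liouville theorem, then use completeness and finiteness of $\tau(g)$ to force the polynomial to be constant, and finally read off $\tau(g)=1-\alpha$ from the asymptotics of $\mathcal L(\varphi)$) is indeed the natural one and matches the spirit of the lemmas imported from \cite{Li 23 Q-curvature}.

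That said, two of the steps you flag as delicate are actually wrong as written, not just delicate. First, the ray argument ``$p\to-\infty$ along $r\omega$ $\Rightarrow$ $e^{np}$ small $\Rightarrow$ $\varphi$ negligible near the ray $\Rightarrow$ $u(r\omega)\le (C+o(1))\log r+p(r\omega)$'' is circular: $\varphi=Q_ge^{nu}$ with $u=\mathcal L(\varphi)+p$, so smallness of $\varphi$ near the ray presupposes control of the very local term in $\mathcal L(\varphi)$ that you are trying to bound. The $L^1$-bound on $\varphi$ alone does not give a pointwise upper bound on $\int_{B_1(x)}\log\frac{1}{|x-y|}\varphi^+(y)\,\ud y$. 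A correct version works on a small cone where $p_d<0$ and uses an integrated estimate (Fubini over directions, or a length estimate averaged over nearby rays) rather than a single ray. Second, the claimed pointwise bound $\mathcal L(\varphi)\ge -C\log|x|-C$ on the cone does not follow from Lemma~\ref{lem: varphi >0}: that lemma needs $\varphi\ge 0$ near infinity, and applying it separately to $\varphi^+$ and $\varphi^-$ gives lower bounds on both $\mathcal L(\varphi^+)$ and $\mathcal L(\varphi^-)$, hence no lower bound on their difference. The fix is to abandon pointwise control here and use Lemma~\ref{lem:B_r_0|x|} instead: the \emph{average} of $\mathcal L(\varphi)$ over $B_{r_1|x|}(x)$ is $(-\alpha+o(1))\log|x|$, which combined with $p\ge c_0|x|^d$ on the cone and Jensen gives the super-polynomial lower bound on $\int_{B_R(0)}e^{nu}$ that you want. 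With these two repairs the outline goes through.
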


Besides, in \cite{Li 23 Q-curvature}, a geodesic distance $d_g(\cdot, \cdot)$ comparison identity is established which will be used in Section \ref{section:sharp} to show some  metrics are  complete.
\begin{theorem}\label{thm:distance comparison}
	{\bf (Theorem 1.4 in \cite{Li 23 Q-curvature})}
	Consider a conformal metric $g=e^{2u}|dx|^2$ on $\mr^n$   with finite total Q-curvature  where   $n\geq 2$ is an even integer.  Supposing that the metric $g$ is normal, then for each fixed point $p$, there holds
	$$ \lim_{|x|\to\infty}\frac{\log d_g(x,p)}{\log|x-p|}=\left(1-\frac{2}{(n-1)!|\mathbb{S}^n|}\int_{\mr^n}Q_ge^{nu}\ud x\right)^+$$
	where, for a constant $c$, $c^+$ denotes that $c$ if $c\geq 0$ and otherwise $0$.
\end{theorem}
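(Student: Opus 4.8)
The idea is that for a normal metric $u$ differs from its radial part $-\alpha\log|x|$ by a term that is small on most of each sphere, where $\alpha:=\frac{2}{(n-1)!|\ms^n|}\int_{\mr^n}Q_ge^{nu}\,\ud x$; since a metric of the form $|x|^{-2\alpha}|dx|^2$ has geodesic distance to the origin of order $\int^{|x|}r^{-\alpha}\,\ud r$, one expects the stated exponent. After a translation we may assume $p=0$, which changes $\log|x-p|$ by $o(\log|x|)$ and hence does not affect the limit. Set $\varphi:=Q_ge^{nu}\in L^1(\mr^n)\cap L^\infty_{\mathrm{loc}}(\mr^n)$, so that $u=\mathcal L(\varphi)+C$ since $g$ is normal, and let $\alpha$ be as above. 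Because $R\mapsto\int_{B_R(0)}e^{nu}\,\ud x$ is nondecreasing we have $\tau(g)\ge0$, so by Theorem~\ref{thm: normal metric iff}, $\tau(g)=1-\alpha=(1-\alpha)^+=:\tau$. Thus it suffices to prove
\[
\limsup_{|x|\to\infty}\frac{\log d_g(x,0)}{\log|x|}\le\tau
\qquad\text{and}\qquad
\liminf_{|x|\to\infty}\frac{\log d_g(x,0)}{\log|x|}\ge\tau .
\]

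\emph{Upper bound.} Fix $\varepsilon>0$. By the definition of $\tau(g)$ we have $\int_{B_R(0)}e^{nu}\,\ud x\le C_\varepsilon R^{n(\tau+\varepsilon)}$ for $R\gg1$, so Hölder's inequality gives $\int_{B_R(0)}e^{u}\,\ud x\le C_\varepsilon R^{\,n-1+\tau+\varepsilon}$. Passing to polar coordinates and restricting to the shell $\{2^k\le|x|\le 2^{k+2}\}$ yields $\int_{2^k}^{2^{k+2}}\!\int_{\ms^{n-1}}e^{u(\rho\omega)}\,\ud\omega\,\ud\rho\le C2^{k(\tau+\varepsilon)}$. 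Averaging in $\rho$ over $[2^k,2^{k+1}]$ produces a good radius $\rho_k\in[2^k,2^{k+1}]$ with $\int_{\ms^{n-1}}e^{u(\rho_k\omega)}\,\ud\omega\le C2^{k(\tau+\varepsilon-1)}$, and averaging in $\omega$ over $[\rho_k,\rho_{k+1}]$ produces a good direction $\omega_k\in\ms^{n-1}$ with $\int_{\rho_k}^{\rho_{k+1}}e^{u(\rho\omega_k)}\,\ud\rho\le C2^{k(\tau+\varepsilon)}$. I then connect $0$ to $x$ (with $N:=\lfloor\log_2|x|\rfloor$) by concatenating: a fixed curve from $0$ to $\rho_{k_0}\omega_{k_0-1}$ inside the compact ball $B_{\rho_{k_0}}(0)$; for each $k$ from $k_0$ to $N+1$, the spherical arc on $\{|y|=\rho_k\}$ from $\rho_k\omega_{k-1}$ to $\rho_k\omega_k$ followed by the radial segment along $\omega_k$ from $\rho_k\omega_k$ to $\rho_{k+1}\omega_k$; and a routine final adjustment from the sphere $\{|y|=\rho_{N+1}\}$ (whose radius is comparable to $|x|$) to $x$. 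The radial segments have $g$-length $\le C2^{k(\tau+\varepsilon)}$ by construction, and each spherical arc has $g$-length $\le C2^{k(\tau+\varepsilon)}$ as well: among the great circles of $\{|y|=\rho_k\}$ joining the two endpoints, which cover that sphere when $n\ge2$, one has by a Crofton-type averaging an $e^u$-arclength integral at most $C\rho_k\int_{\ms^{n-1}}e^{u(\rho_k\omega)}\,\ud\omega\le C2^{k(\tau+\varepsilon)}$. Summing the geometric series $\sum_{k\le N+1}2^{k(\tau+\varepsilon)}\le C2^{N(\tau+\varepsilon)}\le C|x|^{\tau+\varepsilon}$, which converges since $\tau+\varepsilon>0$, gives $d_g(x,0)\le C_\varepsilon|x|^{\tau+\varepsilon}$; letting $\varepsilon\to0$ proves the upper bound.

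\emph{Lower bound.} If $\tau=0$ this is immediate, since $d_g(x,0)\ge d_g(\{|y|=1\},\{|y|=2\})>0$ for $|x|\ge2$, so the ratio is eventually nonnegative. Assume now $\tau=1-\alpha>0$ and fix $0<\varepsilon<\tau$. For any rectifiable curve $\gamma$ from $0$ to $x$, applying the coarea inequality to $\rho(t):=|\gamma(t)|$ and using $|\gamma'(t)|\ge|\rho'(t)|$ gives
\[
\mathrm{length}_g(\gamma)\ \ge\ \int_0^1 e^{u(\gamma(t))}\,|\rho'(t)|\,\ud t\ \ge\ \int_2^{|x|/2}\Big(\inf_{|y|=r}e^{u(y)}\Big)\,\ud r ,
\]
so (taking the infimum over $\gamma$) everything reduces to a lower bound for $u$ near infinity. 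By Lemma~\ref{lem: L(f)}, $u(y)=(-\alpha+o(1))\log|y|+v(y)+C$ where $v(y):=\frac{2}{(n-1)!|\ms^n|}\int_{B_1(y)}\log\frac1{|y-z|}\varphi(z)\,\ud z$, and since the contribution of $\varphi^+$ to $v$ is nonnegative (the kernel is positive on $B_1(y)$), only an upper bound for the logarithmic potential of $\varphi^-$ over $B_1(y)$ is needed. When $Q_g\ge0$ near infinity (or more generally when $Q_g^-e^{nu}$ decays), $\varphi^-$ is compactly supported and that term vanishes for $|y|\gg1$ — this is exactly the computation behind Lemma~\ref{lem: varphi >0} — so $v(y)\ge0$ for $|y|\gg1$, hence $u(y)\ge-(\alpha+\varepsilon)\log|y|-C$, $\inf_{|y|=r}e^{u(y)}\ge c\,r^{-\alpha-\varepsilon}$, and $d_g(x,0)\ge c\int_2^{|x|/2}r^{-\alpha-\varepsilon}\,\ud r\ge c|x|^{\,\tau-\varepsilon}$; letting $\varepsilon\to0$ finishes this case.

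\emph{The main obstacle} is the lower bound in the general sign-changing case, where $\inf_{|y|=r}e^{u}$ can be small on the radii along which $\varphi^-$ concentrates, so the crude coarea bound above is lossy. The extra input is that the local term $v$ lies in $L^1(\mr^n)$ — Fubini gives $\|v\|_1\le C\|\varphi\|_1$ — so the sublevel set $\{v\le-\varepsilon\log|y|\}$ has Euclidean volume tending to $0$ inside each annulus $\{R/2\le|y|\le R\}$; consequently $e^u\ge c\,r^{-\alpha-\varepsilon}$ off a thin exceptional set on most spheres, and a curve that tried to stay inside the sublevel sets of $v$ at every dyadic scale would be forced into angular detours whose $g$-cost already exceeds $|x|^{1-\alpha-\varepsilon}$. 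Turning this dichotomy into a genuine lower bound — excluding a curve that reaches infinity cheaply by threading the sublevel sets of $v$ — is the delicate step; granting it one obtains $d_g(x,0)\ge c|x|^{\tau-\varepsilon}$ for $|x|\gg1$, and $\varepsilon\to0$ completes the proof.
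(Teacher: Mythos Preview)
This statement is not proved in the present paper at all: it is quoted verbatim as Theorem~1.4 of \cite{Li 23 Q-curvature} and used as a black box in Section~\ref{section:sharp}. There is therefore no in-paper argument to compare your proposal against.

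On its own merits, your sketch has real gaps. The most serious one you flag yourself: in the general sign-changing case the lower bound is left as ``the delicate step'' without an argument, so the proof is incomplete precisely where the difficulty lies. Two further issues deserve mention. First, you invoke Theorem~\ref{thm: normal metric iff} to obtain $\tau(g)=1-\alpha$ and then identify this with $(1-\alpha)^+$; but Theorem~\ref{thm: normal metric iff} is stated for \emph{complete} metrics, whereas Theorem~\ref{thm:distance comparison} assumes only normality. Without completeness one may have $\alpha>1$ (e.g.\ the pullback of the round sphere has $\alpha=2$), in which case $\tau(g)=0\neq 1-\alpha$, and the case split $\alpha\ge1$ versus $\alpha<1$ must be handled directly rather than collapsed via that citation. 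Second, in the upper bound your ``Crofton-type averaging'' over great circles through two prescribed endpoints is incorrect for $n\ge3$: two non-antipodal points of $\mathbb{S}^{n-1}$ lie on a \emph{unique} great circle, so there is no family to average over. A correct version is to average geodesics emanating from each endpoint separately (the geodesic spray from a point does cover the sphere with a controlled Jacobian) and pick a common intermediate point; this repairs the step but is not what you wrote.
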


The following Pohozaev-type  inequality is inspired by  the work of Xu (See Theorem 2.1 in \cite{Xu05}). One may also refer to    \cite{Li23 MA} and Lemma 3.1 in \cite{LW}.
\begin{lemma}\label{lem: Pohozaev for non-sign changing}
	 Suppose that $u(x)$ is a  smooth solution to the integral equation
	$$u(x)=\frac{2}{(n-1)!|\mathbb{S}^n|}\int_{\mr^n}\log\frac{|y|}{|x-y|}Q(y)e^{nu(y)}\ud y+C_0$$
where  $C_0$ is a constant, $Qe^{nu}\in L^1(\mr^n)$ and smooth function $Q(x)$ does not change sign near infinity. Then there exists a sequence $R_i\to\infty$ such that
	$$\lim_{i\to \infty}\sup \frac{4}{n!|\mathbb{S}^n|}\int_{B_{R_i}(0)}x\cdot \nabla Q e^{nu}\ud x\leq \alpha_0(\alpha_0-2)$$
	where the notation $\alpha_0$ denotes the normalized  Q-curvature integral 
	$$\alpha_0:=\frac{2}{(n-1)!|\mathbb{S}^n|}\int_{\mr^n}Qe^{nu}\ud x.$$
\end{lemma}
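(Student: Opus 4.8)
The plan is to run the standard Pohozaev argument on balls $B_R(0)$, multiplying the equation $(-\Delta)^{n/2}u = Qe^{nu}$ by $x\cdot\nabla u$ and integrating by parts, but since $u$ is only assumed to solve the \emph{integral} equation \eqref{normal solution}, I would instead work directly with the representation formula. First I would split $u = \mathcal{L}(Qe^{nu}) + C$ and compute $x\cdot\nabla u(x)$ by differentiating under the integral sign:
\begin{equation*}
	x\cdot\nabla u(x) = \frac{2}{(n-1)!|\mathbb{S}^n|}\int_{\mr^n}\frac{x\cdot(y-x)}{|x-y|^2}Q(y)e^{nu(y)}\ud y.
\end{equation*}
Then, using the algebraic identity $\frac{x\cdot(y-x)}{|x-y|^2} = -\frac12 + \frac{|y|^2-|x|^2}{2|x-y|^2}$ together with the symmetrization trick (swapping $x\leftrightarrow y$ in the double integral against $Qe^{nu}(x)\,dx$ over $B_R$), the leading term produces $-\tfrac12\alpha_0$ times the mass and a symmetric remainder. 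The key point is that integrating $n\, x\cdot\nabla u \cdot Qe^{nu}$ over $B_R(0)$ should, after these manipulations, equal $\alpha_0(\alpha_0-2)\cdot\frac{n!|\mathbb{S}^n|}{4}$ plus boundary terms on $\partial B_R$ plus lower-order contributions.

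Next I would relate $\int_{B_R}x\cdot\nabla Q\, e^{nu}\,\ud x$ to $\int_{B_R}x\cdot\nabla u\, Qe^{nu}\,\ud x$ via integration by parts on the vector field $x\, Qe^{nu}$: since $\nabla\cdot(xQe^{nu}) = nQe^{nu} + x\cdot\nabla Q\, e^{nu} + nQe^{nu}\,x\cdot\nabla u$, one gets
\begin{equation*}
	\int_{B_R}x\cdot\nabla Q\, e^{nu}\ud x = -n\int_{B_R}Qe^{nu}\ud x - n\int_{B_R}Qe^{nu}\,x\cdot\nabla u\ud x + R\int_{\partial B_R}Qe^{nu}\ud\sigma.
\end{equation*}
Combining with the previous step expresses the target quantity in terms of $\alpha_0$, the mass $\int_{B_R}Qe^{nu}$ (which converges to $\frac{(n-1)!|\mathbb{S}^n|}{2}\alpha_0$), and boundary terms. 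The non-sign-changing hypothesis on $Q$ near infinity is what lets me say that $R\int_{\partial B_R}Qe^{nu}\ud\sigma$ does not oscillate in sign; then since $\int_{\mr^n}|Q|e^{nu}<\infty$ forces $\liminf_{R\to\infty}R\int_{\partial B_R}|Q|e^{nu}\ud\sigma = 0$ (otherwise the tail integral diverges logarithmically), there is a sequence $R_i\to\infty$ along which this boundary term tends to $0$, and along which we may also arrange (using Lemma \ref{lem: Delta Lf}-type bounds and the finiteness of the mass) that the remaining $\partial B_R$ boundary terms coming from the Pohozaev identity for $u$ are controlled.

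The main obstacle I expect is the careful handling of the boundary terms on $\partial B_R$ arising from the Pohozaev identity — in particular showing that the terms involving $u$ and its derivatives on the sphere (which a priori only satisfy the integral estimates of Lemma \ref{lem: L(f)} and Lemma \ref{lem:B_r_0|x|}, giving $u(x) = (-\alpha_0+o(1))\log|x|$ plus a potentially bad local term) can be absorbed or made nonpositive in the $\limsup$ along a good subsequence $R_i$. This is exactly why the conclusion is stated as a $\limsup$ along a sequence rather than a genuine limit: one selects $R_i$ so that $R_i^{n}\fint_{\partial B_{R_i}}(\cdots)$ is small, which is possible because the corresponding integrals over $\mr^n$ are finite (by $Qe^{nu}\in L^1$ and the growth bound on $\int_{B_R}|u|$ from Lemma \ref{lem: Delta Lf}), so their spherical averages cannot stay bounded away from the favorable sign. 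Once the boundary terms are dispatched along $R_i$, the inequality $\limsup \frac{4}{n!|\mathbb{S}^n|}\int_{B_{R_i}}x\cdot\nabla Q\, e^{nu}\ud x \le \alpha_0(\alpha_0-2)$ follows by collecting the identities above.
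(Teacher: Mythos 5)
Your overall route is the same as the paper's: differentiate the representation formula to get $x\cdot\nabla u$, use the identity $\frac{x\cdot(y-x)}{|x-y|^2}=-\tfrac12+\tfrac{|y|^2-|x|^2}{2|x-y|^2}$ and symmetrization in the double integral, convert $\int_{B_R}Qe^{nu}\,x\cdot\nabla u$ into $\int_{B_R}x\cdot\nabla Q\,e^{nu}$ by the divergence theorem applied to $xQe^{nu}$, and pick $R_i$ with $R_i\int_{\partial B_{R_i}}|Q|e^{nu}\ud\sigma\to 0$ using $Qe^{nu}\in L^1$. But there is a genuine gap at the heart of the argument: after symmetrizing, the antisymmetric kernel $\frac{|x|^2-|y|^2}{|x-y|^2}$ only cancels for $x,y\in B_R$; what survives is the cross term with $x\in B_R$, $y\in\mr^n\setminus B_R$. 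The far-field pieces of this cross term vanish as $R\to\infty$ because the kernel is bounded there and the masses are small, but the near-boundary piece (roughly $x\in B_R\setminus B_{R/2}$, $y\in B_{2R}\setminus B_R$) involves an \emph{unbounded} kernel, of size $\sim R/|x-y|$ near the sphere $|x|=|y|=R$, so it cannot be dismissed as a ``lower-order contribution'' or made small by integrability alone. The paper's key observation is that on this region $|x|\le R\le |y|$, so $\frac{|x|^2-|y|^2}{|x-y|^2}\le 0$, and since $Q$ does not change sign near infinity, $Q(x)Q(y)\ge 0$ there for large $R$; hence this term has a definite sign and can simply be dropped, which is exactly what turns the identity into the stated inequality. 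This step, which is the only place the non-sign-changing hypothesis is used, is absent from your proposal.

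Relatedly, you misattribute the role of that hypothesis: the boundary flux $R\int_{\partial B_R}Qe^{nu}\ud\sigma$ needs no sign information, since $\int_{\mr^n}|Q|e^{nu}<\infty$ already forces $\liminf_{R\to\infty}R\int_{\partial B_R}|Q|e^{nu}\ud\sigma=0$, and along such a sequence the flux tends to zero regardless of sign. Also, your final paragraph worries about boundary terms involving $u$ and its derivatives on $\partial B_R$; in the integral-equation approach you chose (and the paper uses) no such terms arise --- the only boundary term is the flux of $xQe^{nu}$ --- so that concern is a leftover from the differential Pohozaev argument and is not the actual difficulty. To complete the proof you must isolate the annular cross term explicitly (the paper's $I_3$) and run the sign argument above.
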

\begin{proof}
Via  a direct computation, one has
	\begin{equation}\label{equ:x,nabla u}
		\langle x,\nabla u\rangle=-\frac{2}{(n-1)!|\mathbb{S}^n|}\int_{\mr^n}\frac{\langle x,x-y\rangle}{|x-y|^2}Q(y)e^{nu(y)}\ud y
	\end{equation}
	Multiplying by $Qe^{nu(x)}$ and integrating over the ball $B_R(0)$ for any $R>0$, we have
	\begin{equation}\label{equ:ingegrate x,nabla u}
		\int_{B_R(0)}Qe^{nu(x)}\left[-\frac{2}{(n-1)!|\mathbb{S}^n|}\int_{\mr^n}\frac{\langle x,x-y\rangle}{|x-y|^2}Q(y)e^{nu(y)}\ud y\right]\ud x=\int_{B_R(0)}Qe^{nu(x)}\langle x,\nabla u(x)\rangle\ud x.
	\end{equation}
	Using  $x=\frac{1}{2}\left((x+y)+(x-y)\right)$, for the left-hand side of \eqref{equ:ingegrate x,nabla u}, one has the following identity
	\begin{align*}
		LHS=&\frac{1}{2}\int_{B_R(0)}Qe^{nu(x)}\left[-\frac{2}{(n-1)!|\mathbb{S}^n|}\int_{\mr^n}Qe^{nu(y)}\ud y\right]\ud x   \\
		&+\frac{1}{2}\int_{B_R(0)}Qe^{nu(x)}\left[-\frac{2}{(n-1)!|\mathbb{S}^n|}\int_{\mr^n}\frac{\langle x+y,x-y\rangle}{|x-y|^2}Qe^{nu(y)}\ud y\right]\ud x.
	\end{align*}
	Now, we deal with  the last term of above equation by changing variables $x$ and $y$.
	\begin{align*}
		&	\int_{B_R(0)}Q(x)e^{nu(x)}\left[\int_{\mr^n}\frac{\langle x+y,x-y\rangle}{|x-y|^2}Q(y)e^{nu(y)}\ud y\right]\ud x\\
		=&\int_{B_R(0)}Q(x)e^{nu(x)}\left[\int_{\mr^n\backslash B_R(0)}\frac{\langle x+y,x-y\rangle}{|x-y|^2}Q(y)e^{nu(y)}\ud y\right]\ud x\\
		=&\int_{ B_{R/2}(0)}Q(x)e^{nu(x)}\left[\int_{\mr^n\backslash B_R(0)}\frac{\langle x+y,x-y\rangle}{|x-y|^2}Q(y)e^{nu(y)}\ud y\right]\ud x\\
		&+\int_{B_R(0)\backslash B_{R/2}(0)}Q(x)e^{nu(x)}\left[\int_{\mr^n\backslash B_{2R}(0)}\frac{\langle x+y,x-y\rangle}{|x-y|^2}Q(y)e^{nu(y)}\ud y\right]\ud x\\
		&+\int_{B_R(0)\backslash B_{R/2}(0)}Q(x)e^{nu(x)}\left[\int_{B_{2R(0)}\backslash B_{R}(0)}\frac{\langle x+y,x-y\rangle}{|x-y|^2}Q(y)e^{nu(y)}\ud y\right]\ud x\\
		=:&I_1(R)+I_2(R)+I_3(R).
	\end{align*}
 Noticing  that for $x\in B_{R/2}(0)$ and $y\in\mr^n\backslash B_R(0)$, one has 
 $$|\frac{\langle x+y,x-y\rangle}{|x-y|^2}|\leq \frac{|x+y|}{|x-y|}\leq 3.$$
  Then one has 
	$$|I_1|\leq 3\int_{ B_{R/2}(0)}|Q(x)|e^{nu(x)}\ud x\int_{\mr^n\backslash B_R(0)}|Q(y)|e^{nu(y)}\ud y. $$
Similarly, there holds
	$$|I_2|\leq 3\int_{ B_R(0)\backslash B_{R/2}(0)}|Q(x)|e^{nu(x)}\ud x\int_{\mr^n\backslash B_{2R}(0)}|Q(y)|e^{nu(y)}\ud y.$$
	Then both $|I_1|$ and $|I_2|$ tend to zero as $R\to \infty$ due to $Qe^{nu}\in L^1(\mr^n)$.
	
	Now,  we only need to  deal with  the term  $I_3$.
	Since $Q$ doesn't change sign near infinity, for $R\gg 1$, one has 
	$$I_3(R)=\int_{B_R(0)\backslash B_{R/2}(0)}Q(x)e^{nu(x)}\left[\int_{B_{2R(0)}\backslash B_{R}(0)}\frac{x^2-y^2}{|x-y|^2}Q(y)e^{nu(y)}\ud y\right]\ud x\leq 0.$$
	
	As for the right-hand side of \eqref{equ:ingegrate x,nabla u}, by using divergence theorem, we have
	\begin{align*}
		RHS=&\frac{1}{n}\int_{B_R(0)}Q(x)\langle x,\nabla e^{nu(x)}\rangle\ud x   \\
		=&-\int_{B_R(0)}\left( Q(x)+\frac{1}{n}\langle x,\nabla Q(x)\rangle\right)e^{nu(x)}\ud x\\
		&+\frac{1}{n}\int_{\partial B_R(0)}Q(x)e^{nu(x)}R\ud \sigma.
	\end{align*}
	Since $Q(x)e^{nu(x)}\in L^1(\mr^n)$, there exist a sequence $R_i\to \infty$ such that
	$$\lim_{i\to\infty}R_i\int_{\partial B_{R_i}(0)}|Q|e^{nu}\ud\sigma=0.$$
	Thus there holds
	\begin{align*}
		&\frac{1}{n}\int_{B_{R_i}(0)}\langle x\cdot \nabla Q\rangle e^{nu}\ud x\\
		=&-\int_{B_{R_i}(0)}Qe^{nu}\ud x+\frac{1}{n}R_i\int_{\partial B_{R_i}(0)}Qe^{nu}\ud \sigma
		+\frac{\alpha_0}{2}\int_{B_{R_i}(0)}Qe^{nu(x)}\ud x \\
		&+I_1(R_i)+I_2(R_i)+I_3(R_i)\\
		\leq &-\int_{B_{R_i}(0)}Qe^{nu}\ud x+\frac{1}{n}R_i\int_{\partial B_{R_i}(0)}Qe^{nu}\ud \sigma
		+\frac{\alpha_0}{2}\int_{B_{R_i}(0)}Qe^{nu(x)}\ud x \\
		&+I_1(R_i)+I_2(R_i)
	\end{align*}
	which yields that
	$$\lim_{i\to \infty}\sup \frac{4}{n!|\mathbb{S}^n|}\int_{B_{R_i}(0)}x\cdot \nabla Q e^{nu}\ud x\leq \alpha_0(\alpha_0-2).$$
	
\end{proof}

\section{Proofs}\label{section:proof}

Throughout our proofs, we will  argue by contradiction and suppose that there exists a complete  conformal metric $g=e^{2u}|dx|^2$ with finite total Q-curvature such that $Q_g=f$ which satisfies
$$(-\Delta)^{\frac{n}{2}}u=fe^{nu}$$
 with $fe^{nu}\in L^1(\mr^n)$. For brevity, set 
$$\beta=\frac{2}{(n-1)!|\mathbb{S}^n|}\int_{\mr^n}fe^{nu}\ud x.$$

{\bf Proof of Theorem \ref{thm: f geq 1}:} 
	
	Since $f\geq 1$ near infinity and $fe^{nu}\in L^1(\mr^n)$, one has $e^{nu}\in L^1(\mr^n)$ which deduces that  $\tau(g)=0.$ With help of Theorem \ref{thm: normal metric iff},  there holds that the metric $g$ is normal and 
	\begin{equation}\label{fe^nu  =1}
		\frac{2}{(n-1)!|\mathbb{S}^n|}\int_{\mr^n}fe^{nu}\ud x=1.
	\end{equation}
Since  $u$ is normal  and $f\geq 1$ near infinity,  for $|y|\gg1$, Lemma \ref{lem: varphi >0} and  \eqref{fe^nu  =1} yield that
\begin{equation}
	u(y)=\mathcal{L}(fe^{nu})(y)+C\geq -\log|y|-C.
\end{equation}
In particular, for $|x|\gg1$ and any $y\in B_{|x|/2}(x)$, there holds
\begin{equation}\label{u geq -alpha _0}
	u(y)\geq -\log(\frac{3}{2}|x|)-C=-\log|x|-C. 
\end{equation}
For $|x|\gg1$, with help of Jensen's inequality, the estimate \eqref{u geq -alpha _0} and the fact $f\geq 1$ near infinity show that 
\begin{align*}
	\int_{B_{\frac{|x|}{2}}(x)}fe^{nu}\ud y\geq &	\int_{B_{\frac{|x|}{2}}(x)}e^{nu}\ud y\\
	\geq &|B_{\frac{|x|}{2}}(x)|\exp\left(\frac{1}{|B_{\frac{|x|}{2}}(x)|}\int_{B_{\frac{|x|}{2}}(x)}nu(y)\ud y\right)\\
	\geq &C|x|^{n}\cdot |x|^{-n}\\
	\geq &C
\end{align*}
which contradicts to  $fe^{nu}\in L^1(\mr^n)$.  Thus we finish our proof.

\vspace{1em}
{\bf Proof of Theorem \ref{thm:xnabla log f geq -1}:}	Due to the condition \eqref{condition for f}, Lemma \ref{lem: f geq |x|^s} yields that
	\begin{equation}\label{ f decay |x|-n/2}
		f(x)\geq C|x|^{-\frac{n}{2}},\;  |x|>1.
	\end{equation}
	Since $fe^{nu}\in L^1(\mr^n)$, the estimate \eqref{ f decay |x|-n/2} shows that for $R>1$, 
	\begin{align*}
		\int_{B_R(0)}e^{nu}\ud x \leq & \int_{B_1(0)}e^{nu}\ud x+\int_{B_R(0)\backslash B_1(0)}e^{nu}\ud x\\
		\leq &C+C\int_{B_R(0)\backslash B_1(0)}f(x)|x|^{\frac{n}{2}}e^{nu(x)}\ud x\\
		\leq &C+CR^{\frac{n}{2}}\int_{B_R(0)\backslash B_1(0)}f(x)e^{nu(x)}\ud x\\
			\leq &CR^{\frac{n}{2}}
	\end{align*}
which yields that
$$\tau(g)\leq \frac{1}{2}.$$
 Theorem \ref{thm: normal metric iff} shows that  $u$ is normal and 
	\begin{equation}\label{complete yield alpha leq 1}
		\beta =1-\tau(g)\leq 1
	\end{equation}
where we have used the fact $\tau(g)\geq 0$.
By the definition of $\tau(g)$, one has
\begin{equation}\label{equ:tau(g)geq0}
	\tau(g)\geq \lim_{R\to\infty}\inf\frac{\log\int_{B_1(0)}e^{nu}\ud x}{\log|B_R(0)|}=0.
\end{equation}
	With help of  the condition  \eqref{condition for f}, one has 
	\begin{align*}
		&\frac{4}{n!|\mathbb{S}^n|}\int_{B_{R}(0)}x\cdot \nabla f e^{nu}\ud x\\
		\geq & \frac{4}{n!|\mathbb{S}^n|}\int_{B_{R}(0)} -\frac{n}{2}fe^{nu}\ud x\\
		=&-\frac{2}{(n-1)!|\mathbb{S}^n|}\int_{B_{R}(0)} fe^{nu}\ud x.
	\end{align*}
	Taking advantage of Lemma \ref{lem: Pohozaev for non-sign changing} and choosing a suitable sequence, there holds
	$$\beta(\beta-2)\geq -\beta$$
	which yields that $$\beta \geq 1$$ where we have used the fact $\beta >0$ since $f>0$. If the equality holds, one must have
	$$x\cdot \nabla f(x)=-\frac{n}{2}f(x), a.e. $$
	which is impossible by  choosing  sufficiently small $\delta>0$ such that   $$x\cdot \nabla f(x)+\frac{n}{2}f(x)>0$$ for $x\in B_\delta(0)$. Hence we obtain that  $$\beta >1$$ which contradictions to \eqref{complete yield alpha leq 1}.
	
Thus the  proof is complete.

\vspace{1em}
{\bf Proof of Theorem \ref{thm: f leq 0}:} Based on  the assumption \eqref{f leq -|x|^-n},  there exists $R_1>0$ such that for $|x|\geq R_1$,
\begin{equation}\label{|x| geq R_1}
	|f(x)|\geq C|x|^{-n}.
\end{equation}
For $R>R_1+1$, there holds
\begin{align*}
	\int_{B_R(0)}e^{nu}\ud x=& \int_{B_{R_1}(0)}e^{nu}\ud x+\int_{ B_R(0)\backslash B_{R_1}(0)}e^{nu}\ud x\\
	\leq &C-C\int_{ B_R(0)\backslash B_{R_1}(0)}f(x)|x|^{n}e^{nu}\ud x\\
	\leq &C-CR^{n}\int_{ B_R(0)\backslash B_{R_1}(0)}f(x)e^{nu}\ud x\\
	\leq &CR^{n}
\end{align*}
which yields that $\tau(g)$ is finite. Making use of Theorem \ref{thm: normal metric iff}, we show that $u(x)$ is normal  satisfying
$$u(x)=\mathcal{L}(fe^{nu})+C.$$
With help of Jensen's inequality and Lemma \ref{lem:B_r_0|x|}, for $|x|\gg1$, one has
\begin{align*}
	&\int_{B_{\frac{|x|}{2}}(x)}|f(y)|e^{nu(y)}\ud y\\
	\geq &C\int_{B_{\frac{|x|}{2}}(x)}|y|^{-n}e^{nu(y)}\ud y\\
	\geq &C|x|^{-n}\int_{B_{\frac{|x|}{2}}(x)}e^{n\mathcal{L}(fe^{nu})(y)}\ud y\\
	\geq &C|x|^{-n}|B_{\frac{|x|}{2}}(x)|\exp\left(\frac{1}{|B_{\frac{|x|}{2}}(x)|}\int_{B_{\frac{|x|}{2}}(x)}n\mathcal{L}(fe^{nu})(y)\ud y\right)\\
	\geq &C|x|^{-n}\cdot |x|^n\cdot|x|^{-n\beta+o(1)}.
\end{align*}
Then $fe^{nu}\in L^1(\mr^n)$  deduces that  
\begin{equation}\label{beta geq 0}
	\beta\geq 0.
\end{equation}
However, since $f\leq 0$ satisfies \eqref{f leq -|x|^-n}, we must have $\beta <0$ which contradicts to \eqref{beta geq 0}.

\vspace{1em}
{\bf Proof of Theorem \ref{thm: f < 0 near infinity}:} Due to assumption \eqref{f leq- |x|^s} and $fe^{nu}\in L^1(\mr^n)$, similar to the proof of Theorem \ref{thm: f leq 0}, one has $\tau(g)$ is finite and then $u(x)$ is  normal.

With help of Jensen's inequality and Lemma \ref{lem:B_r_0|x|}, for $|x|\gg1$, there holds
\begin{align*}
	&\int_{B_{\frac{|x|}{2}}(x)}|f(y)|e^{nu(y)}\ud y\\
	\geq &C\int_{B_{\frac{|x|}{2}}(x)}|y|^{s}e^{nu(y)}\ud y\\
	\geq &C|x|^{s}\int_{B_{\frac{|x|}{2}}(x)}e^{n\mathcal{L}(fe^{nu})(y)}\ud y\\
	\geq &C|x|^{s}|B_{\frac{|x|}{2}}(x)|\exp\left(\frac{1}{|B_{\frac{|x|}{2}}(x)|}\int_{B_{\frac{|x|}{2}}(x)}n\mathcal{L}(fe^{nu})(y)\ud y\right)\\
	\geq &C|x|^{s}\cdot |x|^n\cdot|x|^{-n\beta+o(1)}.
\end{align*}
which  yields that
$$\beta\geq 1+\frac{s}{n}>1.$$
However, Theorem  \ref{thm: normal metric iff} deduces that $\beta \leq 1$  due to \eqref{equ:tau(g)geq0} which is the desired contradiction.

\section{Sharp decay  rate for $f$}\label{section:sharp}

With help of Lemma \ref{lem: f geq |x|^s}, the condition \eqref{condition for f} deduces that 
$$f(x)\geq C|x|^{-\frac{n}{2}},\;|x|>1.$$
For $n=2$, in  \cite{Cheng-Lin Pisa}, \cite{HuTr}, \cite{McOwen} and \cite{Tro}, their results ensure  that
for a positive function $f(x)$ satisfying 
$$f(x)\leq C|x|^{-l},|x|\gg1, \;l>0, $$
the solutions to  equation \eqref{NP for R^n} exist.  To serve our aim, a baby version of McOwen's result \cite{McOwen} can be  stated as follows.
\begin{theorem}\label{thm: McO}
	{\bf (McOwne \cite{McOwen})} Given a smooth function $f(x)$ which is  positive somewhere and  satisfies 
	\begin{equation}
		 f(x)= O(|x|^{-l})
	\end{equation}
	where $l>0$,  $|x|\gg1$ and $C$ is a positive  constant.  Then for any $\alpha\in (\max\{0,2-l\}, 2)$, there exist a solution $u(x)$ to the following equation
	$$-\Delta u=fe^{2u},\;\mathrm{on}\; \mr^2$$
	satisfying  $$\alpha=\frac{1}{2\pi}\int_{\mr^2}fe^{2u}\ud x.$$
\end{theorem}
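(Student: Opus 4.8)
The plan is to reconstruct McOwen's variational argument. \textbf{Step 1 (absorbing the prescribed total curvature into the asymptotics).} By the $n=2$ case of Lemma~\ref{lem: L(f)}, a normal solution $u$ of $-\Delta u = fe^{2u}$ on $\mr^2$ with $fe^{2u}\in L^1$ behaves like $u(x) = -\big(\tfrac{1}{2\pi}\int_{\mr^2}fe^{2u}\big)\log|x| + O(1)$, so prescribing the normalized total curvature $\alpha$ is the same as prescribing the logarithmic growth rate $-\alpha$. Accordingly I would write $u(x) = v(x) - \tfrac{\alpha}{2}\log(1+|x|^2)$. Since $\Delta\log(1+|x|^2) = 4(1+|x|^2)^{-2}$, the equation becomes
\begin{equation}\label{eq:v-eq}
	-\Delta v = \tilde f(x)\, e^{2v} - h(x), \qquad \tilde f := \frac{f}{(1+|x|^2)^{\alpha}},\quad h := \frac{2\alpha}{(1+|x|^2)^2},
\end{equation}
where $\tilde f(x) = O(|x|^{-l-2\alpha})$ is still positive somewhere, $h\ge 0$ is smooth, and $\int_{\mr^2}h\,\ud x = 2\pi\alpha$. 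The gain is that once \eqref{eq:v-eq} is solved with $v$ bounded and $\nabla v$ decaying, integrating it over $\mr^2$ kills the left-hand side and forces $\int_{\mr^2}\tilde f e^{2v}\,\ud x = \int_{\mr^2}h\,\ud x = 2\pi\alpha$, i.e.\ $\tfrac{1}{2\pi}\int_{\mr^2}fe^{2u}\,\ud x = \alpha$ automatically.

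\textbf{Step 2 (variational formulation).} On a weighted Sobolev space $\mathcal D$ of functions with $\nabla v\in L^2(\mr^2)$ and suitable decay at infinity — a space $W^{1,2}_{\delta}$ on which $-\Delta$ is Fredholm, in McOwen's functional-analytic setup — I would look for critical points of
\begin{equation}\label{eq:functional}
	J(v) = \frac12\int_{\mr^2}|\nabla v|^2\,\ud x + \int_{\mr^2}h\,v\,\ud x - \pi\alpha\,\log\!\left(\int_{\mr^2}\tilde f\,e^{2v}\,\ud x\right).
\end{equation}
Because $\int_{\mr^2}h\,\ud x = 2\pi\alpha$, the functional $J$ is invariant under $v\mapsto v+\mathrm{const}$, and its Euler--Lagrange equation is $-\Delta v = \lambda\,\tilde f e^{2v} - h$ with $\lambda = 2\pi\alpha\big(\int_{\mr^2}\tilde f e^{2v}\big)^{-1}>0$; replacing $v$ by $v+\tfrac12\log\lambda$ then yields an exact solution of \eqref{eq:v-eq}. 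Thus I would minimize $J$ over $\mathcal D$ modulo constants. The two constraints on $\alpha$ enter precisely here: $\alpha>\max\{0,2-l\}$ is exactly what makes $\tilde f$ decay fast enough that the exponential term and the linear term $\int hv$ are finite and mutually consistent on $\mathcal D$ (and that the a priori logarithmic asymptotics close up), while $\alpha<2$ is the strict-subcriticality used in the next step.

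\textbf{Step 3 (coercivity, compactness, regularity).} A Moser--Trudinger inequality on $\mr^2$ with the decaying weight $\tilde f$ bounds $\log\int_{\mr^2}\tilde f e^{2v}\,\ud x$ by $c\,\|\nabla v\|_{L^2}^2$ up to lower-order terms, with the sharp constant $c$ such that $\pi\alpha\cdot c<\tfrac12$ precisely when $\alpha<2$; hence, after normalizing the additive constant (say $\int_{\mr^2}hv\,\ud x=0$), the quadratic term dominates and $J$ is coercive and bounded below on $\mathcal D$. Take a minimizing sequence; coercivity extracts an energy-bounded, weakly convergent subsequence. The \emph{strict} inequality $\alpha<2$ makes $v\mapsto\int_{\mr^2}\tilde f e^{2v}\,\ud x$ sequentially weakly continuous on energy-bounded sets, which rules out concentration of $e^{2v}$; the hypothesis that $f$ (hence $\tilde f$) is positive somewhere is what guarantees the limiting $\int\tilde f e^{2v}$ is strictly positive, so the infimum is attained at some $v_0$. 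Standard elliptic bootstrapping in \eqref{eq:v-eq} gives $v_0\in C^\infty$, and the decay of $\tilde f$ and $h$ gives $v_0 = O(1)$ at infinity, which legitimizes the integration in Step~1. Undoing the substitution produces the required $u$. (A monotone-iteration scheme between sub- and supersolutions built from $-\alpha\log|x|$ is a conceivable alternative, but the global nature of the constraint $\tfrac{1}{2\pi}\int fe^{2u}=\alpha$ makes the variational route cleaner.)

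\textbf{Main obstacle.} The heart of the matter is Step~3: proving the weighted Moser--Trudinger inequality on the noncompact space $\mr^2$ and, with it, the attainment of the infimum — that is, excluding loss of compactness, both concentration of $e^{2v}$ at a point (controlled by the strict bound $\alpha<2$) and escape of mass to infinity (controlled by the decay of the weight $\tilde f$, which is where $\alpha>2-l$ is needed). Making these two mechanisms quantitative and compatible inside a weighted-Sobolev framework on which $-\Delta$ behaves well is the genuinely delicate part; the translation/scaling invariances of $J$, dealt with by quotienting out constants before minimizing, are a comparatively routine complication.
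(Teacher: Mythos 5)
You should first be aware that the paper offers no proof of Theorem \ref{thm: McO}: it is imported as a ``baby version'' of McOwen's theorem \cite{McOwen}, and the only methodological indication in the text is the remark in Section \ref{section:sharp} that the crucial ingredient is Proposition 1 of \cite{McOwen}, a singular-type Moser--Trudinger inequality (also Theorem 6 of \cite{Tro}). Measured against that source, your outline follows the right route: the substitution $u=v-\frac{\alpha}{2}\log(1+|x|^2)$, the shifted equation $-\Delta v=\tilde f e^{2v}-h$ with $\int_{\mr^2}h\,\ud x=2\pi\alpha$, and minimization of the translation-invariant functional $\frac12\int_{\mr^2}|\nabla v|^2\,\ud x+\int_{\mr^2}hv\,\ud x-\pi\alpha\log\int_{\mr^2}\tilde fe^{2v}\,\ud x$ is exactly the McOwen/Troyanov scheme; your Euler--Lagrange and rescaling bookkeeping and the arithmetic $\pi\alpha\cdot\frac{1}{4\pi}<\frac12\iff\alpha<2$ are correct.

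As a standalone proof, however, the proposal has a genuine gap, which you yourself flag: Step 3 asserts, without proof, the weighted (singular) Moser--Trudinger inequality on the noncompact plane, the coercivity it implies, and the weak continuity of $v\mapsto\int_{\mr^2}\tilde fe^{2v}\,\ud x$ on energy-bounded sets. These assertions are the entire analytic content of the theorem --- they are precisely Proposition 1 of \cite{McOwen} and the attendant concentration analysis --- so nothing substantive is actually established. Two further points would need care in a complete write-up. First, your Step 2 claim that $\alpha>\max\{0,2-l\}$ is ``exactly'' what makes the exponential term finite is off: $\tilde f=O(|x|^{-l-2\alpha})$ is integrable as soon as $\alpha>(2-l)/2$. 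The true role of the stronger bound is the one you only gesture at in your closing paragraph: heuristically, after inversion the point at infinity becomes a conical-type singularity of the weighted problem with local Moser--Trudinger mass threshold $2\pi(l+2\alpha-2)$, and demanding that the prescribed mass $2\pi\alpha$ stay strictly below this threshold (no escape of mass to infinity) is exactly $\alpha>2-l$, just as the finite-point threshold $4\pi$ gives $\alpha<2$; a correct proof must make this quantitative rather than attribute the condition to mere integrability of $\tilde f$. Second, deducing $\int_{\mr^2}\tilde fe^{2v}\,\ud x=\int_{\mr^2}h\,\ud x$ by integrating the equation over $\mr^2$ requires decay of the flux $\int_{\partial B_R(0)}\partial_\nu v\,\ud\sigma$, not merely boundedness of $v$; this is normally extracted from the logarithmic potential representation (in the spirit of Lemma \ref{lem: L(f)}) rather than from elliptic estimates alone.
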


\begin{theorem}\label{thm:exist for l>1}
	Consider a positive and smooth function $f$ on $\mr^2$  satisfying 
	\begin{equation}\label{f |x|-l decay}
		C^{-1}|x|^{-l}\leq f(x)\leq C|x|^{-l}
	\end{equation}
where $l>1$, $|x|\gg1$ and $C$ is a positive constant. There exists a complete metric $g=e^{2u}|dx|^2$ on $\mr^2$ with finite total curvature such that its  Gaussian curvature $K_g=f$.
\end{theorem}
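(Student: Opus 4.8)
The plan is to construct the complete conformal metric by invoking McOwen's existence theorem (Theorem \ref{thm: McO}) to produce a solution $u$ with a prescribed value of the total curvature $\alpha$, then to choose $\alpha$ in a range that guarantees both completeness and finite total curvature. Since $f$ satisfies the two-sided bound \eqref{f |x|-l decay} with $l>1$, we have in particular $f=O(|x|^{-l})$, so McOwen's theorem applies: for every $\alpha\in(\max\{0,2-l\},2)=(0,2)$ (using $l>1$) there is a solution $u$ of $-\Delta u=fe^{2u}$ on $\mr^2$ with $\frac{1}{2\pi}\int_{\mr^2}fe^{2u}\,\ud x=\alpha$. In particular $fe^{2u}\in L^1(\mr^2)$, so the metric has finite total curvature. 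I would fix a value $\alpha\in(0,1)$, say $\alpha=\tfrac12$, so that the volume entropy will satisfy $\tau(g)=1-\alpha>0$.

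The next step is to verify that $u$ is a normal solution, i.e.\ that $u=\mathcal{L}(fe^{2u})+C$. For $n=2$ this is standard for solutions arising from McOwen's construction, but to be safe I would argue via Theorem \ref{thm: normal metric iff}: it suffices to show $\tau(g)$ is finite. Using the lower bound $f(x)\ge C^{-1}|x|^{-l}$ from \eqref{f |x|-l decay} exactly as in the proof of Theorem \ref{thm:xnabla log f geq -1} (replacing $n/2$ there by $l$), one gets $\int_{B_R(0)}e^{2u}\,\ud x\le CR^{l}$, hence $\tau(g)\le l/2<\infty$; so $u$ is normal. Combined with the identity $\tau(g)=1-\alpha$ from Theorem \ref{thm: normal metric iff} and our choice $\alpha<1$, we get $\tau(g)=1-\alpha>0$.

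The heart of the matter is completeness. Here I would apply the distance comparison identity, Theorem \ref{thm:distance comparison}: since $g$ is normal,
$$
\lim_{|x|\to\infty}\frac{\log d_g(x,p)}{\log|x-p|}=\Bigl(1-\frac{1}{2\pi}\int_{\mr^2}K_ge^{2u}\,\ud x\Bigr)^{+}=(1-\alpha)^{+}=1-\alpha>0.
$$
Thus $d_g(x,p)\to\infty$ as $|x|\to\infty$ — more precisely $d_g(x,p)\ge c|x-p|^{(1-\alpha)/2}$ for $|x|$ large — which shows that divergent sequences in $\mr^2$ have divergent $g$-distance, and since $g$ is a smooth metric on $\mr^2$ this gives geodesic completeness. (One should note that Theorem \ref{thm:distance comparison} is stated for $n\ge 2$ even, which includes $n=2$.) The main obstacle, and the point requiring the most care, is precisely this completeness argument: one must ensure that the asymptotic growth rate of $d_g$ genuinely forces every Cauchy sequence for $d_g$ to stay in a compact subset of $\mr^2$, i.e.\ rule out escape to infinity in finite $g$-length; the strict positivity $1-\alpha>0$, which is why we insisted on choosing $\alpha<1$ rather than merely $\alpha<2$, is exactly what makes this work, and it is here that the hypothesis $l>1$ is essential (it is what makes the interval $(\max\{0,2-l\},2)$ contain points below $1$).
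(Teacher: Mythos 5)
Your overall strategy coincides with the paper's: apply Theorem \ref{thm: McO} to produce a solution with normalized total curvature $\alpha<1$, show the solution is normal, and then get completeness from Theorem \ref{thm:distance comparison} since $1-\alpha>0$. However, there is a genuine gap at the normality step. You propose to deduce that $u$ is normal from Theorem \ref{thm: normal metric iff} by checking that $\tau(g)$ is finite, but that theorem is stated for a \emph{complete} metric with finite total Q-curvature; completeness is exactly the conclusion you are trying to establish, so this appeal is circular (the same objection applies to your use of the identity $\tau(g)=1-\alpha$ from that theorem). The equivalence ``$\tau(g)$ finite $\Leftrightarrow$ normal'' is not available here without completeness, and the remark ``this is standard for solutions arising from McOwen's construction'' is not a proof. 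The paper closes precisely this gap by a direct argument: from the lower bound in \eqref{f |x|-l decay} and $\frac{1}{2\pi}\int fe^{2u}<1$ it gets $\int_{B_R(0)}e^{2u}\,\ud x\leq CR^l$, then uses $u^+\leq \frac{l+4}{2}e^{\frac{2}{l+4}u}$ and H\"older to show $\frac{1}{|B_R(0)|}\int_{B_R(0)}u^+\,\ud x=o(R)$, combines this with Lemma \ref{lem: Delta Lf} to control the logarithmic potential $v=\mathcal{L}(fe^{2u})$, and concludes via Liouville's theorem that the harmonic function $P=u-v$ is constant, i.e.\ $u$ is normal. Only after that does it invoke Theorem \ref{thm:distance comparison} (which requires normality, not completeness) to get completeness. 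You would need to insert such a direct normality argument for your proof to be valid.

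A secondary slip: for $1<l<2$ the admissible interval in Theorem \ref{thm: McO} is $(2-l,2)$, not $(0,2)$, so your specific choice $\alpha=\tfrac12$ is not always available (e.g.\ for $l<\tfrac32$); what the hypothesis $l>1$ guarantees is only that the interval meets $(0,1)$, so one should choose some $\alpha\in(\max\{0,2-l\},1)$, as the paper effectively does. Your final step, that $d_g(x,p)\to\infty$ as $|x|\to\infty$ forces completeness, is fine and matches the paper.
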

\begin{proof}
	With help of Theorem \ref{thm: McO} and the fact $l>1$, we can find a solution $u(x)$ to the equation
	\begin{equation}\label{Dleta u=fe^2u}
		-\Delta u=fe^{2u}
	\end{equation}
	satisfying 
	\begin{equation}\label{total curvature <1}
		\frac{1}{2\pi}\int_{\mr^2}fe^{2u}\ud x<1.
	\end{equation}
Based on the condition  \eqref{f |x|-l decay} and \eqref{total curvature <1}, for $R\gg1$, one has 
\begin{equation}\label{volume finite growth}
	\int_{B_R(0)}e^{2u}\ud x\leq C R^l.
\end{equation}

With help of the fact $\frac{2}{l+4}u^+\leq e^{\frac{2}{l+4}u}$ and H\"older's inequality, one has
\begin{align*}
	\int_{B_R(0)}u^+(x)\ud x\leq & \frac{l+4}{2}\int_{B_R(0)}e^{\frac{2}{l+4}u}\ud x\\
	\leq &\frac{l+4}{2}\left(\int_{B_R(0)}e^{2u}\ud x\right)^{\frac{1}{l+4}}|B_R(0)|^{\frac{l+3}{l+4}}\\
	\leq &CR^{\frac{l}{l+4}}R^{\frac{2(l+3)}{l+4}}\\
	=&CR^{3-\frac{6}{l+4}}
\end{align*}
which yields that
\begin{equation}\label{int u^+}
	\frac{1}{|B_R(0)|}\int_{B_R(0)}u^+\ud x=o(R).
\end{equation}
Set
$$v(x):=\frac{1}{2\pi}\int_{\mr^2}\log\frac{|y|}{|x-y|}f(y)e^{2u(y)}\ud y$$
and 
$$P(x):=u-v$$
satisfying
\begin{equation}\label{harmonic P}
	\Delta P=0.
\end{equation}
Making use of Lemma \ref{lem: Delta Lf},  for $R\gg1$, there holds
\begin{equation}\label{B_Rv}
	\frac{1}{|B_R(0)|}\int_{B_R(0)}|v(x)|\ud x=O(\log R)=o(R).
\end{equation}
Combing the estimate \eqref{int u^+} with \eqref{B_Rv}, one has
\begin{equation}\label{int_B_R P}
	\frac{1}{|B_R(0)|}\int_{B_R(0)}P^+(x)\ud x\leq \frac{1}{|B_R(0)|}\int_{B_R(0)}\left(u^+(x)+|v(x)|\right)\ud x=o(R).
\end{equation}
With help of Liouville's theorem,  \eqref{harmonic P} and \eqref{int_B_R P} deduce  that 
$$P(x)\equiv C.$$
Thus $u(x)$ is a normal solution to \eqref{Dleta u=fe^2u}. Making use of Theorem \ref{thm:distance comparison},  there holds 
\begin{equation}\label{distance}
	\lim_{|x|\to\infty}\frac{\log d_g(x,p)}{\log|x-p|}=1-\frac{1}{2\pi}\int_{\mr^2}fe^{2u}\ud x>0
\end{equation}
which yields that the conformal metric $g$ is complete(See Theorem 5.7.1 in \cite{Petersen}). Finally, we finish our proof.
\end{proof}

 For $n\geq 4$, a result  analogous to Theorem \ref{thm: McO} might  still hold  by  using the method taken in \cite{McOwen}  or  \cite{Chang-Chen}.  A crucial ingredient is 
 Proposition 1  in \cite{McOwen} which  a singular type Moser-Trudinger inequality which is also established in Theorem 6 of \cite{Tro}. For higher dimensional cases, such a singular type Adams-Moser–Trudinger inequality has been established by Theorem 4.6 in \cite{F-M}, Theorem 2.4 in \cite{JYY}.  With help of such inequality,  one may consider a suitable functional and  find its minimizer which a normal solution with finite total Q-curvature less than $\frac{(n-1)!|\mathbb{S}^n|}{2}$. Finally, making use of Theorem \ref{thm:distance comparison}, one may show that such normal metric  is  complete. However, this is just the general idea, actually realizing  it and making it clear is not an easy task. For convenience, we leave it as a question.
 \begin{question}
 	 Given a  smooth $f(x)$ positive somewhere on $\mr^n$ where $n\geq 2$ is an even integer and $f(x)$ satisfies 
 	$$
 	f(x)=O(|x|^{-s}),\; s>\frac{n}{2},
 	$$
 	then there is a complete   conformal metric $g=e^{2u}|dx|^2$ on $\mr^n$  with finite total Q-curvature  such that its  Q-curvature $Q_g=f(x)$.
 	\end{question}
 \begin{remark}
 	After submitting this work, this question has been answered by the author joint with Biao Ma in \cite{LM}.
 \end{remark}

\end{document}